\newcommand{\half}{\frac{1}{2}}
\newcommand{\thalf}{\tfrac{1}{2}}
\newcommand{\sump}{\mathop{{\sum}^{+}}}
\newcommand{\sums}{\mathop{{\sum}^{*}}}
\def\({\left(}
\def\){\right)}
\let\vp=\varphi
\let\ve=\varepsilon
\def\ppmod{\!\!\!\!\!\pmod}
\let\ve=\varepsilon
\let\ol=\overline
\let\vp=\varphi
\theoremstyle{definition}
\theoremstyle{plain}
\newtheorem{theorem}{Theorem}
\newtheorem{lemma}[theorem]{Lemma}
\numberwithin{equation}{section}
\numberwithin{theorem}{section}
\newtheorem{proposition}[theorem]{Proposition}
\def\qed{\ifhmode\textqed\fi
   \ifmmode\ifinner\quad\qedsymbol\else\dispqed\fi\fi}
\def\textqed{\unskip\nobreak\penalty50
    \hskip2em\hbox{}\nobreak\hfil\qedsymbol
    \parfillskip=0pt \finalhyphendemerits=0}
\def\dispqed{\rlap{\qquad\qedsymbol}}
\begin{document}
\title[TBA]{N\MakeLowercase{on-vanishing of} D\MakeLowercase{irichlet} $L$\MakeLowercase{-functions at the central point}}

\author[X. Qin] {Xinhua Qin}

\address{X. Qin: School of Mathematics, Hefei University of Technology, Hefei 230009, P.R. China}
\email{qinxh@mail.hfut.edu.cn}

\author[X. Wu]{Xiaosheng Wu}
\address{X. Wu: School of Mathematics, Hefei University of Technology, Hefei 230009, P.R. China}
\email{xswu@amss.ac.cn}

\begin{abstract}
We prove that for at least $\frac{7}{19}$ of the primitive Dirichlet characters $\chi$ with large general modulus, the central value $L(\frac12,\chi)$ is non-vanishing.
\end{abstract}

\keywords{moments, Dirichlet $L$-functions, non-vanishing, Kloosterman sums, central value}
\subjclass[2010]{11M20}

\maketitle

\section{Introduction}\label{sec 1}
\noindent
The non-vanishing of various $L$-functions at the central point on the critical line (i.e., the point with real part $\frac12$) has attracted significant interest and generated extensive literature. It is widely conjectured that an $L$-function does not vanish at the central point unless there is some arithmetic significance or the functional equation forces the central value to be zero. The mollifier method has proven remarkably successful in establishing the non-vanishing of $L$-functions. Notable examples include the works of Kowalski et al. \cite{KMVa,KMVb}, Iwaniec and Sarnak \cite{IS2000}, Soundararajan \cite{Sou2000}, and the works discussed below.

In this paper, we investigate the classical non-vanishing problem of primitive Dirichlet $L$-functions. It is conjectured that every primitive Dirichlet $L$-function does not vanish at the central point, a problem that remains unresolved. Consider the family of $L$-functions
\[
\{L(s,\chi): \chi\ \text{is primitive modulo}\ q\}.
\]
A straightforward application of the Cauchy-Schwarz inequality shows
\[
\sums_{\substack{\chi\ppmod{q}\\ L(\frac12,\chi)\neq 0}}1\ge \frac{\left|\sums_{\chi\ppmod{q}}L(\frac12,\chi)\right|^2}{\sums_{\chi\ppmod{q}}\left|L(\frac12,\chi)\right|^2}.
\]
Using classical results on asymptotic expressions for the first and second moments (see Paley \cite{Pal1931}), we can deduce that the number of $L(\frac12,\chi)\neq 0$ in the family satisfies $\gg\vp^*(q)/\log q$.

By introducing a quantity $M(\chi)$ to mollify the value of $L(\frac12,\chi)$, the mollifier method is employed to recover the logarithmic factor $\log q$, thereby enabling the inference of a positive non-vanishing proportion. Let
\[
\kappa=\frac{1}{\vp^*(q)}\sums_{\substack{\chi\ppmod{q}\\L(\frac{1}{2},\chi) \neq 0}}1.
\]
By applying the Cauchy-Schwarz inequality, we obtain
\begin{align}\label{eqratio}
	\kappa\ge \frac{1}{\vp^*(q)}\frac{{\left|\sums_{\chi\ppmod{q}} L(\frac{1}{2},\chi)M(\chi)\right|^2}}{\sums_{\chi\ppmod{q}} {\left| L(\frac{1}{2},\chi)M(\chi) \right|^2}}.
\end{align}
Balasubramanian and Murty \cite{balasubramanian1992zeros} were the first to obtain a positive proportion using this approach. However, the ordinary mollifier is not efficient, and their proportion is very small.

The first major breakthrough in attaining a considerable proportion was achieved by Iwaniec and Sarnak \cite{Iwaniec1999}. They proved $\kappa\ge \frac13-\ve$ by
using the mollifier
\begin{align*}
	M(\chi)=\sum_{m\leq M} \frac{a(m) \chi(m)}{m^{\frac{1}{2}}},
\end{align*}
where $M=q^{\theta}$ is the length of the mollifier and the coefficients are
\begin{align}
	 a(m)=\mu(m)\frac{\log(y/m)}{\log(y)}.\label{ymdedingyi}
\end{align}
More precisely, they established
\begin{equation} \label{eqtheta1}
	\kappa \ge \frac{\theta}{1+\theta},
\end{equation}
which yields $\kappa\ge\frac13-\ve$ by taking $\theta\rightarrow \frac12$.

For more than a decade, the proportion established by Iwaniec and Sarnak remained the best known result. By introducing a `better' two-piece mollifier, Bui \cite{Bui} surpassed the $\frac13$ record and improved the proportion to $\kappa>0.3411$. In his work \cite{Bui}, Bui commented that ``There are two different approaches to improve the results in this and other problems involving mollifiers. One can either extend the length of the Dirichlet polynomial or use some `better' mollifiers. The former is certainly much more difficult.'' The optimality of mollifiers is discussed in depth by \v{C}ech and Matom\"{a}ki in their recent work \cite{CM2025}. Subsequent developments, including the results presented in this paper, have adopted the former, more difficult approach.

\subsection{The mollifier method of Michel and VanderKam}
A fundamental contribution in this area is the work of Michel and VanderKam \cite{Michel}.
They introduced the following twisted mollifier
\[
M(\chi)=\sum\limits_{m\leq M} \frac{a(m) \chi(m)}{m^{\frac{1}{2}}}+\frac{{\bar{\tau}_\chi}}{q^{\frac{1}{2}}}\sum\limits_{m\leq M} \frac{a(m) {\bar{\chi}}(m)}{m^{\frac{1}{2}}},
\]
where $M=q^{\theta}$ for some $\theta>0$, and $\tau_\chi$ is the Gauss sum. With this new mollifier, they proved
\[
\kappa \ge \frac{2\theta}{2\theta+1}.
\]
Compared to \eqref{eqtheta1}, this result significantly improved the bound for the same $\theta$.
Unfortunately, the length $\theta$ of Michel--VanderKam's mollifier is constrained by the difficulties in treating the following exponential sum:
\begin{align}\label{eqexponential}
\sum_{m_1, m_2\le M}\alpha_{m_1}\beta_{m_2}
\sums_{\substack{n_1,n_2\\m_1m_2n_1>1}}V\left(\frac{n_1n_2}{q}\right)
e\left(\frac{n_2\overline{{n_1}{q_1}{m_1}{m_2}}}{q_2}\right),
\end{align}
where $V$ is a smooth function that restricts $n_1n_2\ll q^{1+\ve}$.
They successfully addressed this exponential sum for $\theta<\frac14$, which resulted in the same proportion $\kappa>\frac13-\ve$ as that achieved by Iwaniec and Sarnak \cite{Iwaniec1999}. Although Michel and VanderKam's work \cite{Michel} primarily focuses on the non-vanishing of derivatives of Dirichlet $L$-functions, an outstanding contribution lies in their introduction of a new feasible approach to extend the range of $\theta$.

\subsection{New inputs by Khan, Mili\'cevi\'c, and Ngo}
For prime moduli $p$, Khan and Ngo \cite{Khan2016} were the first to successfully extend the length of Michel--VanderKam's mollifier. They obtained the proportion $\kappa>\frac38-\ve$ by proving $\theta<\frac3{10}$ is admissible.
A key innovation in their work lies in their treatment of the exponential sum \eqref{eqexponential} through an estimate of a quartic sum of Kloosterman sums, established using the theory of ``trace functions'' in a work of Fouvry et al. \cite{Fouvry2014}. Specifically, they utilized the bound
\begin{align}\label{eqFkloosterman}
\sum_{h\ppmod{p}}S(h,b_1;p)S(h,b_2;p)S(h,b_3;p)S(h,b_4;p)\ll p^{\frac52}
\end{align}
which holds if there is a $b_i$ distinct from the others modulo $p$, where
\begin{equation}
	S(a,b;q)=\sums_{x\ppmod{q}}e_{q}\left(ax+b\bar{x}\right) \nonumber
\end{equation}
denotes the Kloosterman sum. In a subsequent work \cite{KMN2022}, they further improved the proportion to $\frac5{13}$ in collaboration with Mili\'cevi\'c.
A more flexible mollifier of the form
\begin{align}\label{mollifier1}
	M(\chi)=c_1\sum\limits_{m\leq M}\frac{a(m) \chi(m)}{m^\frac{1}{2}}+c_2\frac{\bar{\tau}_\chi}{p^\frac{1}{2}}\sum\limits_{m\leq R}\frac{a(m) \overline{\chi}(m)}{m^\frac{1}{2}}
\end{align}
was employed, where $M\le p^{\theta_1}$ and $R\le p^{\theta_2}$. The values of $\theta_1$ and $\theta_2$ are subject to the analysis of the exponential sum \eqref{eqexponential} but need not be equal, and $c_1$ and $c_2$ are freely chosen coefficients. In their work \cite{KMN2022}, Khan, Mili\'cevi\'c, and Ngo chose $c_1:c_2=\theta_1:\theta_2$ with $\theta_1=\frac38-\ve$ and $\theta_2=\frac14-\ve$.

\subsection{Our main results}
No analogous improvements have been achieved for general moduli. The theory of ``trace functions'' is particularly effective for prime moduli, and as a consequence, the estimate \eqref{eqFkloosterman} for a quartic sum of Kloosterman sums remains unknown in the case of general moduli. This represents the primary challenge.
Inspired by the series of works \cite{Khan2016}, \cite{KMN2022}, and \cite{Michel}, we introduce new techniques to extend the length of Michel--VanderKam's mollifier for general moduli $q$. Our approach involves establishing an average estimate for a quartic sum of Kloosterman sums (see Theorem \ref{le:1}), where we aim to achieve savings by averaging over $b_i$ for general moduli. This relies on delicate estimates for various exponential sums in Section.\ref{secG}--\ref{secTh}. Unlike the fixed $\theta_1>\theta_2$ in \cite{KMN2022}, our final choices of $\theta_1$ and $\theta_2$ are not fixed values but depend on $q$. Furthermore, we ultimately select larger $\theta_2$ such that $\theta_2>\theta_1$ to ensure an adequate range of $b_i$ for achieving the desired savings.
\begin{theorem}\label{1.1}
For sufficiently large integers $q$, there are at least $\frac{7}{19}-\ve$ of primitive Dirichlet characters $\chi$ (mod q) for which $L(s,\chi)\neq0$.
\end{theorem}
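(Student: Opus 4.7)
The plan is to apply the mollifier method via \eqref{eqratio} with a two-piece Michel--VanderKam-type mollifier of the shape \eqref{mollifier1},
\[
M(\chi)=c_1\sum_{m\le M}\frac{a(m)\chi(m)}{m^{1/2}}+c_2\frac{\ol{\tau}_\chi}{q^{1/2}}\sum_{m\le R}\frac{a(m)\ol{\chi}(m)}{m^{1/2}},
\]
with $M=q^{\theta_1}$, $R=q^{\theta_2}$ and $a(m)$ the Iwaniec--Sarnak coefficient \eqref{ymdedingyi}. I would treat $(c_1,c_2)$ as free weights and $(\theta_1,\theta_2)$ as parameters to be optimized. The argument then cleanly splits into (i) an asymptotic evaluation of the twisted first moment $\sums_{\chi\ppmod q}L(\half,\chi)M(\chi)$, (ii) an asymptotic evaluation of the twisted second moment $\sums_{\chi\ppmod q}|L(\half,\chi)M(\chi)|^2$, and (iii) an optimization over $(c_1,c_2,\theta_1,\theta_2)$ subject to the admissibility constraints forced by the analysis in (ii).

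The first moment is the easier half. I would open $L(\half,\chi)$ via its approximate functional equation, exchange summations, and apply orthogonality of primitive characters modulo $q$. The straight part of $M(\chi)$ paired with the direct half of the approximate functional equation gives the main term; the twisted part paired with the dual half of the functional equation contributes an equal-size main term through the identity $\tau_\chi\ol{\tau}_\chi=q$. Off-diagonal contributions are controlled by standard bounds on smoothed divisor sums, yielding a main term of the form $(c_1+c_2)\cdot(\text{explicit function of }\theta_1,\theta_2)\cdot\vp^*(q)$ with acceptable error.

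The substance of the proof sits in the second moment. Expanding $|M(\chi)|^2$ produces four pieces: two ``diagonal'' pieces in which the two copies of $M(\chi)$ are of the same type, and two ``off-diagonal'' pieces pairing the straight part with the twisted part. After the approximate functional equation and orthogonality, the diagonal pieces reduce to twisted second moments of Paley type and are handled by classical methods. The off-diagonal pieces reduce exactly to the exponential sum \eqref{eqexponential}, and the main obstacle is to estimate \eqref{eqexponential} for general moduli, which by Poisson/completion is equivalent to a nontrivial bound for a quartic sum of Kloosterman sums. For prime moduli this is done pointwise by the Fouvry--Kowalski--Michel bound \eqref{eqFkloosterman}, but no such pointwise bound is currently available for composite $q$. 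I would compensate by proving the on-average estimate of Theorem~\ref{le:1}, which exploits cancellation in the $b_i$-averages through delicate analysis of the associated complete exponential sums modulo $q$. Crucially, this averaging only yields savings once $\theta_2$ is large enough to provide sufficient $b_i$-range, which is exactly why one is driven to the asymmetric regime $\theta_2>\theta_1$ and to letting $\theta_1,\theta_2$ depend on $q$, in contrast to the fixed $\theta_1>\theta_2$ used in \cite{KMN2022}.

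With both moments in hand, \eqref{eqratio} becomes a quotient of explicit quadratic forms in $(c_1,c_2)$ whose coefficients are polynomial in $(\theta_1,\theta_2)$. Solving the resulting $2\times2$ generalized eigenvalue problem for the optimal $(c_1,c_2)$ and then maximizing over the admissible range of $(\theta_1,\theta_2)$ dictated by Theorem~\ref{le:1}, one obtains the stated lower bound $\kappa\ge\tfrac{7}{19}-\ve$. I expect the main technical obstacle to be, by a comfortable margin, the proof of Theorem~\ref{le:1}: extracting $b_i$-averaged cancellation in the quartic Kloosterman sum over a general modulus $q$, which is what ultimately dictates how far $\theta_2$ can be pushed and hence the numerical value $\tfrac{7}{19}$.
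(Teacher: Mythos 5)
Your proposal follows the paper's route exactly: the same two-piece Michel--VanderKam mollifier with free weights $(c_1,c_2)$ and lengths $q^{\theta_1},q^{\theta_2}$, reduction of the cross term of the second moment to the exponential sum \eqref{eqexponential} and hence to the $b_i$-averaged quartic Kloosterman-sum estimate of Theorem~\ref{le:1}, and a final Cauchy--Schwarz optimization in $(c_1,c_2)$ giving the ratio $\bigl(1+1/(\theta_1+\theta_2)\bigr)^{-1}$ with $c_1:c_2=\theta_1:\theta_2$. The only piece you describe loosely is the last step: the admissible $(\theta_1,\theta_2)$-region from Proposition~\ref{pro:2} depends on $\gamma$ with $\mathring q=q^\gamma$, and the paper must split into $0\le\gamma\le\tfrac15$ and $\tfrac15<\gamma\le\tfrac13$, choosing $(\theta_1,\theta_2)$ differently in each regime, to confirm that the optimized proportion never falls below $\tfrac{7}{19}-\ve$; this casework is the entire content of the proof of Theorem~\ref{1.1} given Propositions~\ref{pro:1} and~\ref{pro:2}.
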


We note that in a recent work by Leung \cite{Leung}, a non-vanishing proportion of $0.359$ is achieved for square-free and smooth moduli.
Our proportion in Theorem \ref{1.1} improves upon the previous best known result of $0.3411$ for general moduli, as well as the proportion in Leung's recent work \cite{Leung} for square-free and smooth moduli. However, it does not surpass the proportion for prime moduli. To feel the strength of Theorem \ref{1.1}, we can consider its corresponding $\theta$ in Iwaniec and Sarnak's mollifier. For the proportion in Theorem \ref{1.1}, one needs to take $\theta=\frac7{12}$ in Iwaniec and Sarnak's mollifier, which is even larger than the best-known length $\frac47$ in studying zeros of Riemann zeta-function on the critical line, thanks to the celebrated work of Conrey \cite{Con1989}.

The non-vanishing of Dirichlet $L$-functions has attracted significant attention, and there have been many recent developments in this area. We highlight a few of them. Bui et al. \cite{BPRZ2020} succeeded in breaking the $\frac12$ barrier for the mollifier in the work of Iwaniec and Sarnak \cite{Iwaniec1999}, leading to several notable applications, although the improvement for the non-vanishing problem itself is very slight. Under the unlikely assumption of the existence of an exceptional Dirichlet character, Bui et al. \cite{BPZ2021} proved the non-vanishing proportion $\frac12-\ve$ for prime moduli, which was later improved to $1-\ve$ by \v{C}ech and Matom\"{a}ki \cite{CM2024} under the same assumption.
Under the Generalized Riemann Hypothesis, the classical barrier $\frac12$ was broken by Drappeau, Pratt, and Radziwi\l\l \cite{DPR2023}, and they obtained the proportion $0.51$ by computing the one-level density.

Also based on the mollifier method, David, Faveri, Dunn, and Stucky \cite{DFDS2024} studied the non-vanishing problem for the family of Hecke $L$-functions associated to primitive cubic characters defined over the Eisenstein quadratic number field, and a positive proportion was first obtained.

We present our average result on a quartic sum of Kloosterman sums in the following theorem, which may be of independent interest.
Denote by
\begin{equation}
	G_{q}(b_1,b_2,b_3,b_4)={\sum_{h\ppmod{q}}S(h,\bar{b}_1;q)S(h,\bar{b}_2;q)S(h,\bar{b}_3;q)S(h,\bar{b}_4;q)}, \nonumber
\end{equation}
where $\bar{b}$ is the inverse of $b$ modulo $q$.
\begin{theorem} \label{le:1}
Let $q$ be a large integer. For $B\le q$, we have
	\begin{equation}
		\mathrm{A}(q):={\sums_{1\leq b_1,b_2,b_3,b_4\leq B}}\vert{G}_{q}(b_1,b_2,b_3,b_4)\vert\ll B^4q^{\frac52}\mathring{q}^{-\frac12}+B^2{q}^{3}\mathring{q}+B{q}^{\frac72}{\mathring{q}}^\frac12,
	\end{equation}
where $\mathring{q}$ is the product of all distinct primes coming from odd power of primes in $q$ such that
\begin{align}\label{eqqring}
\mathring{q}=\prod_{\substack{p^{2k+1}\Vert{q}\\k\geq1}}p.
\end{align}
\end{theorem}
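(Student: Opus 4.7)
My plan is to reduce the quartic Kloosterman average to a complete exponential sum via orthogonality in $h$, then extract cancellation by averaging over $\bm{b}$ using Cauchy--Schwarz combined with a CRT-based local analysis.

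\textit{Step 1 (Orthogonality).} I will open each Kloosterman sum and execute the $h$-sum modulo $q$ by orthogonality, rewriting
\begin{align*}
G_q(\bm{b})=q\sums_{\substack{x_1,x_2,x_3,x_4\ppmod{q}\\ x_1+x_2+x_3+x_4\equiv 0}}e_q\!\left(\bar b_1\bar x_1+\bar b_2\bar x_2+\bar b_3\bar x_3+\bar b_4\bar x_4\right).
\end{align*}
After eliminating $x_4$ via the linear constraint this is a complete three-variable rational-function exponential sum modulo $q$. I will then write $\vert G_q(\bm b)\vert=\eta(\bm b)G_q(\bm b)$ with $\vert\eta\vert=1$, swap the $\bm{b}$- and $\bm{x}$-summations, and obtain $A(q)=q\sums_{\bm x,\sum x_i\equiv 0}\Phi(\bar x_1,\bar x_2,\bar x_3,\bar x_4)$, where $\Phi(\bm u)=\sums_{b_i\le B}\eta(\bm b)\,e_q(\bar b_1u_1+\bar b_2u_2+\bar b_3u_3+\bar b_4u_4)$ is the incomplete character sum over the box.

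\textit{Step 2 (Cauchy--Schwarz and CRT).} I will apply Cauchy--Schwarz in the $\bm{x}$-variables to obtain
\begin{align*}
\vert A(q)\vert^{2}\ll q^{5}\sums_{\bm x,\sum x_i\equiv 0}\vert\Phi(\bar{\bm x})\vert^{2}.
\end{align*}
Expanding the square and integrating out $\bm{x}$ by orthogonality produces a weighted sum $\sums_{\bm b,\bm b'}\eta(\bm b)\overline{\eta(\bm b')}\,K(\bar{\bm b}-\bar{\bm b}')$ whose kernel $K$ is itself a $G_q$-type object at shifted arguments. I will then factor $q=\prod_{p}p^{v_p(q)}$ by CRT and split $K$ into local pieces, reducing the problem to a product of estimates at each prime power $p^k\Vert q$.

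\textit{Step 3 (Local analysis and $\mathring q$).} At $p\Vert q$ the local piece is a multi-variable exponential sum on a twisted variety, for which Weil-type bounds (analogous to those underlying the Fouvry--Kowalski--Michel estimate in the prime-modulus case) yield square-root cancellation. At $p^{2k}\Vert q$ with $k\ge 1$ the explicit Salié-type evaluation of Kloosterman sums at even prime powers reduces matters to quadratic Gauss sums. At $p^{2k+1}\Vert q$ with $k\ge 1$ the analogous explicit formulas are available but carry an intrinsic loss of $\sqrt p$ in cancellation per such prime, and this is exactly what produces the $\mathring q$-dependence in the final bound. Tracking the diagonal, partial-diagonal, and fully off-diagonal contributions of the completed $\bm b,\bm b'$-sum then yields the three asserted terms $B^{4}q^{5/2}\mathring q^{-1/2}$, $B^{2}q^{3}\mathring q$, and $Bq^{7/2}\mathring q^{1/2}$.

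\textit{Main obstacle.} The hard part will be the uniform prime-power local analysis of step 3. Without the Deligne--Katz machinery that is available at prime moduli, at each $p^k$ one must hand-craft $p$-adic stationary-phase and completion arguments and track how the parity of $k$ controls the quality of the cancellation; correctly balancing the three $\mathring q$-shapes through the CRT reduction is the delicate bookkeeping that the technical Sections~\ref{secG}--\ref{secTh} of the paper are designed to carry out.
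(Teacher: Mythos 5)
Your Step~1 (completing the $h$-sum by orthogonality and rewriting $G_q(\vec b)$ as a complete three-variable exponential sum) is a correct and sometimes useful reformulation, and your remarks in Step~3 about the $\mathring q$-dependence stemming from $p^{2k+1}\,\Vert\, q$ with $k\ge 1$ are in the right spirit. However, Step~2 contains a fatal loss of a power of $q$ that cannot be recovered in Step~3, no matter how sharp the local analysis is.

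Concretely, your Cauchy--Schwarz in the $\vec x$-variables gives
\[
|A(q)|^2 \ll q^2 \cdot \#\{\vec x \ppmod q: \textstyle\sum x_i\equiv 0,\ (x_i,q)=1\}\cdot \sums_{\substack{\vec x\ppmod q\\ \sum x_i\equiv 0}} |\Phi(\bar{\vec x})|^2 \;\ll\; q^5\sums_{\substack{\vec x\\ \sum x_i\equiv 0}}|\Phi(\bar{\vec x})|^2.
\]
But the right-hand side is a sum of non-negative terms, and the diagonal $\vec b=\vec b'$ in $|\Phi|^2$ alone contributes $\gg B^4\cdot q^3$ (there are $\asymp B^4$ admissible $\vec b$ and, for each, $\asymp q^3$ admissible $\vec x$ with $\sum x_i\equiv 0$). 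Thus the displayed inequality cannot give anything better than $|A(q)|\ll B^2 q^4$. On the other hand the theorem asserts $A(q)\ll B^4 q^{5/2}\mathring q^{-1/2}+B^2 q^3\mathring q+Bq^{7/2}\mathring q^{1/2}$, and since $\mathring q\le q^{1/3}$ the middle term is $\le B^2 q^{10/3}$, already smaller than $B^2 q^4$; the first term needs $B\gg q^{3/4}\mathring q^{1/4}$ to dominate $B^2q^4$, which is never the case in the application (there $B\asymp q^{\theta_2}$ with $\theta_2<\tfrac12$). So the proposed Cauchy--Schwarz step is strictly too wasteful: the loss occurs before any off-diagonal cancellation can help, and no version of Step~3 can undo it.

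The paper avoids this by never averaging in $\vec x$. Instead it first reduces to prime powers via multiplicativity, then parameterizes $h$ and $x_i$ $p$-adically at each $p^{2k}$ and $p^{2k+1}$ (Propositions~\ref{prop:2} and~\ref{prop:3}) to bound $|G_{p^k}(\vec b)|$ \emph{pointwise} by $p^{5k/2}$ times an explicit counting function $\mathcal N_1$ or $\mathcal N_2$ of the residues of $\vec b$. Only after this does the averaging over $\vec b$ take place (Section~\ref{secTh}), where the congruence conditions defining $\mathcal N_1,\mathcal N_2$ are detected by additive characters; symmetry in $(b_1,b_2)$ and $(b_3,b_4)$ then factors the $\vec b$-sum as $|\mathcal A_3|^2|\mathcal A_4|^2$, and Poisson summation on $\mathcal A_3$ together with the exponential-sum bounds of Section~\ref{secAK} produces the three terms. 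In other words, the Cauchy--Schwarz/positivity structure lives at the level of the $\vec b$-sum (after the local parameterization), not at the level of the $\vec x$-sum. This is the key structural difference you would need to adopt; your current Step~2 should be discarded.
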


\subsection{Notation}
Throughout the paper, we adopt the standard convention that $\ve$ denotes an arbitrarily small positive constant, which may vary from line to line, and the implied constants in all estimates may depend on $\ve$. For a set $A$, we denote by $\#{A}$ the cardinality of $A$, and $\mathbf{1}_{A}$ represents the indicator function of $A$. We use $q^*$ (or $\rho^*, d^*$) to denote the largest square-free factor of an integer $q$ (or $\rho, d$), and $\mathring{q}$ is another square-free factor defined as in \eqref{eqqring}. For brevity, the 4-tuple
$(b_1,b_2,b_3,b_4)$ is sometimes denoted by
$\vec{b}$.

We frequently use the two summation notations
\[
\sums_{n} \quad \text{and}\quad \sums_{x\ppmod{q}},
\]
where, following the standard conventions in analytic number theory, the first sum runs over integers $n$ coprime to $q$, and the second sum runs over all primitive Dirichlet characters $\chi$ modulo $q$.


\section{The proof of Theorem $1$}
We employ a mollifier of the same form as $\eqref{mollifier1}$, defined by
\begin{align}\label{mollifier}
	M(\chi)= c_1\sum_{m\le M} \frac{ a(m) \chi(m)}{m^{\half}}+c_2\frac{\bar{\tau}_{\chi}}{q^\half}\sum_{m\le R } \frac{a(m) \bar{\chi}(m)}{m^{\half}},
\end{align}
where $M=q^{\theta_1}$, $R=q^{\theta_2}$ for some $\theta_1, \theta_2>0$, and $c_1,c_2>0$.
By \eqref{eqratio}, it suffices to evaluate the first and second mollified moments
\begin{align}
	\label{yijieju}\mathcal{M}_1=&\frac{2}{\vp^*(q)} \sump_{\chi\ppmod{q}} L(\thalf,\chi) M(\chi),\\
	\label{Second-moment}\mathcal{M}_2=&\frac{2}{\vp^*(q)} \sump_{\chi\ppmod{q}}\vert L(\thalf,\chi) M(\chi)\vert^2.
\end{align}
We provide a detailed analysis of the moments over even primitive characters, and analogous treatments yield the same result for moments over odd primitive characters. We also assume $q\not\equiv 2 \pmod4$, a convention maintained throughout the rest of the paper, since no primitive characters modulo $q$ exist when $q\equiv 2 \pmod4$.

\begin{proposition}\label{pro:1}
	For $q\not\equiv 2 \pmod4$ and $\theta_1, \theta_2\in(0,\frac12-\ve)$, we have
	\begin{equation}\label{eqfm}
		\mathcal{M}_1=c_1+c_2+O(q^{-{\ve}}).
	\end{equation}
\end{proposition}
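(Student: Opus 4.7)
The plan is to apply an approximate functional equation (AFE) to $L(\tfrac12,\chi)$, expand the product $L(\tfrac12,\chi)M(\chi)$ into four pieces, and separate the ``symmetric'' pieces (which produce the main terms $c_1$ and $c_2$) from the ``twisted'' pieces (which carry a residual Gauss-sum factor and should be error terms). For an even primitive $\chi\ppmod q$, the root number is $\varepsilon_\chi=\tau_\chi/\sqrt q$, so the AFE reads
\[
L(\tfrac12,\chi)=L_1(\chi)+\varepsilon_\chi L_2(\chi),\quad L_1(\chi)=\sum_n\frac{\chi(n)}{\sqrt n}V\!\left(\frac{n}{\sqrt q}\right),\quad L_2(\chi)=\sum_n\frac{\bar\chi(n)}{\sqrt n}V\!\left(\frac{n}{\sqrt q}\right),
\]
for a smooth weight $V$ of rapid decay with $V(0)=1$. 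Using $\tau_\chi\bar\tau_\chi=q$, the combinations $L_1M_1$ and $L_2M_2$ carry no residual Gauss-sum factor, while the cross combinations $L_1M_2$ and $L_2M_1$ retain $\bar\tau_\chi/\sqrt q$ and $\tau_\chi/\sqrt q$ respectively.

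For the symmetric piece $c_1L_1M_1$, character orthogonality for even primitive characters gives
\[
\sump_{\chi\ppmod q}\chi(nm)=\tfrac12\sum_{\substack{d\mid q\\ d\mid nm-1}}\varphi(d)\mu(q/d)+\tfrac12\sum_{\substack{d\mid q\\ d\mid nm+1}}\varphi(d)\mu(q/d).
\]
The principal divisor $d=q$ contributes $\varphi^*(q)/2$ only when $nm\equiv\pm1\ppmod q$. Rapid decay of $V$ restricts $n\ll\sqrt q(\log q)^A$, and $m\le M=q^{\theta_1}$ with $\theta_1<\tfrac12-\varepsilon$ gives $nm<q$, so $nm\equiv\pm1\ppmod q$ forces $nm=1$, i.e.\ $n=m=1$. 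Using $a(1)=1$ and $V(1/\sqrt q)=1+O(q^{-A})$ this contributes $c_1+O(q^{-A})$. The non-principal $d<q$ contributions, together with the rare $nm=q-1$ solutions (requiring $m\mid q-1$), are bounded by $O(q^{-1/2+\theta_1/2+\varepsilon})$. The analogous argument for $c_2L_2M_2$ produces $c_2+O(q^{-\varepsilon})$.

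For the twisted piece involving $\bar\tau_\chi L_1M_2$, I apply the Gauss-sum identity $\bar\tau_\chi\bar\chi(m)=\sum_{c\ppmod q}\bar\chi(c)\,e(-c\bar m/q)$ (valid for $(m,q)=1$) to replace $\bar\tau_\chi$ by an additional short character sum. Orthogonality in $\sump_{\chi\ppmod q}\chi(n)\bar\chi(c)$ then restricts $c\equiv\pm n\ppmod q$, i.e.\ $c\in\{n,q-n\}$, and the piece reduces to an exponential sum of the form
\[
\frac{c_2}{\sqrt q}\sums_{m\le R}\frac{a(m)}{\sqrt m}\sums_{n\ge1}\frac{V(n/\sqrt q)}{\sqrt n}\cdot 2\cos\!\left(\frac{2\pi n\bar m}{q}\right)+(\text{subprincipal}).
\]
Applying Mellin inversion on $V$, a contour shift, and the standard bound $\sum_{n\le N}e(n\alpha)\ll 1/\|\alpha\|$ combined with partial summation, the inner $n$-sum is $\ll q^\varepsilon$ uniformly in $\bar m$, so the whole piece is $\ll q^{\theta_2/2-1/2+\varepsilon}=O(q^{-\varepsilon})$ under $\theta_2<\tfrac12-\varepsilon$. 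The twin twisted piece is handled symmetrically with $\theta_1$ in place of $\theta_2$. The main obstacle is the uniform control of this exponential sum and of the subprincipal $d<q$ contributions; the hypothesis $\theta_1,\theta_2<\tfrac12-\varepsilon$ is essential, since it both forces the symmetric pieces to the single $n=m=1$ diagonal and supplies the power saving in the twisted pieces.
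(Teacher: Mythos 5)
Your overall plan---expand $L(\tfrac12,\chi)$ by an approximate functional equation, open the Gauss sum in the twisted cross-pieces, and isolate the diagonal $nm=1$---is the standard route of Iwaniec--Sarnak and Michel--VanderKam, and it matches what the paper invokes: the paper's own ``proof'' of Proposition~\ref{pro:1} is simply a citation to \cite{Iwaniec1999}, \cite{Michel} and \cite[Section 3.1]{KMN2022}, so you are reconstructing a known calculation rather than offering a different argument. Your treatment of the symmetric pieces is sound (one small phrasing issue: when $nm=1$ the \emph{entire} sum $\sum_{d\mid q}\varphi(d)\mu(q/d)=\varphi^*(q)$ contributes, not ``the principal divisor $d=q$'' alone), and your identification of $nm=q-1$ as negligible via the decay of $V$ is correct.

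There is, however, one genuine error in the twisted piece. You assert that
\[
\sums_{n\ge 1}\frac{V(n/\sqrt q)}{\sqrt n}\cos\!\Big(\frac{2\pi n\bar m}{q}\Big)\ll q^{\varepsilon}\qquad\text{uniformly in }\bar m,
\]
and the Mellin-plus-partial-summation recipe you sketch does not produce such a bound. Indeed for $m=1$ one has $\bar m=1$, so $\cos(2\pi n/q)\approx 1$ throughout the range $n\ll\sqrt q$, and the sum is $\asymp q^{1/4}$, not $q^\varepsilon$. More generally your recipe yields at best something of the shape $\min\bigl(q^{1/4+\varepsilon},\,\|\bar m/q\|^{-1/2}q^\varepsilon\bigr)$, which degenerates to $q^{1/4}$ when $\|\bar m/q\|\asymp 1/q$. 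Fortunately this error is harmless for the conclusion: the trivial bound $q^{1/4+\varepsilon}$ already gives
\[
\frac{1}{\sqrt q}\sums_{m\le R}\frac{|a(m)|}{\sqrt m}\cdot q^{1/4+\varepsilon}\ll q^{\theta_2/2-1/4+\varepsilon},
\]
which is $O(q^{-\varepsilon'})$ precisely because $\theta_2<\tfrac12-\varepsilon$ (and similarly with $\theta_1$ for the other twisted piece). For $m\ge 2$ one even has $\|\bar m/q\|\gg q^{-\theta_2}$, so the $n$-sum is much smaller, but that refinement is not needed. You should replace the claimed uniform $q^\varepsilon$ bound with this trivial $q^{1/4}$ bound, which suffices. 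Finally, the ``(subprincipal)'' contributions from divisors $d<q$ in the twisted piece are dismissed without argument; they do need a (routine but not entirely free) estimate, exactly as in \cite[Section 6.1]{Michel}.
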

\begin{proof}
This asymptotic formula can be derived from the results in \cite{Iwaniec1999} and \cite{Michel}, and a detailed exposition can be found, for example, in \cite[Section 3.1]{KMN2022}. 
\end{proof}
\begin{proposition}\label{pro:2}
	For $q\not\equiv 2 \pmod4$, let $\mathring{q}=q^\gamma$ with $0\leq\gamma\leq\frac13$. We have
	\begin{align}
		\mathcal{M}_2=\frac{{c_1}^2}{\theta_1}+\frac{{c_2}^2}{\theta_2}+{(c_1+c_2)}^2+O\left(\frac{\log\log q}{\log q}\right)
	\end{align}
when $\theta_1,\theta_2\in(0,\frac12-\ve)$ satisfy
\begin{align}\label{eqtheta12}
4\theta_1+6\theta_2<3+\gamma,\ \ \ 2\theta_1+\theta_2<1-\gamma,\ \ \ 4\theta_1<1-\gamma.
\end{align}
\end{proposition}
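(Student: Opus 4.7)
The plan is to open the square $|L(\thalf,\chi)M(\chi)|^2$ and apply the approximate functional equation for $|L(\thalf,\chi)|^2$, so that after summing over $\chi$ we must evaluate
\begin{equation*}
\mathcal{M}_2 = \frac{2}{\vp^*(q)} \sump_{\chi\ppmod q} \sum_{n_1, n_2} \frac{\chi(n_1)\bar\chi(n_2)}{\sqrt{n_1 n_2}}\, V\!\left(\frac{n_1 n_2}{q}\right) M(\chi)\overline{M(\chi)}
\end{equation*}
for a suitable smooth cutoff $V$. Expanding $M(\chi)\overline{M(\chi)}$ produces four pieces: the pure-untwisted piece with weight $c_1^2$, the pure-twisted piece with weight $c_2^2$ (where $|\tau_\chi|^2/q=1$), and two cross pieces with weight $c_1c_2$ each carrying a single Gauss sum. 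I would treat these three types of contribution separately.

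The pure pieces $c_1^2$ and $c_2^2$ are classical mollified second moments in the style of Iwaniec--Sarnak. After orthogonality of characters the diagonal $n_1 m_1' = n_2 m_1$ is extracted by a contour shift, giving the expected constants $c_1^2\bigl(\tfrac{1}{\theta_1}+1\bigr)$ and $c_2^2\bigl(\tfrac{1}{\theta_2}+1\bigr)$; the twisted case reduces to the untwisted one via the functional equation for $L(s,\chi)$ together with $|\tau_\chi|^2=q$. For these pieces the only constraint needed is $\theta_1,\theta_2<\tfrac12-\ve$, and no condition involving $\gamma$ arises.

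The main work lies in the two cross pieces. After orthogonality, the character average of $\bar\chi(n_1)\chi(n_2 m_1 m_2)\bar\tau_\chi$ unfolds using Gauss sum identities, producing the additive character $e_{q_2}(n_2\overline{n_1 q_1 m_1 m_2})$ and leading precisely to the Michel--VanderKam exponential sum \eqref{eqexponential}. Applying Poisson summation in $n_2$ (and in $n_1$) isolates the frequency $h=0$ term, which, once the remaining Mellin contour is shifted, assembles into $2c_1c_2$ after combining with the contributions $c_1^2+c_2^2$ already present in the off-diagonals of the pure pieces, yielding the total $(c_1+c_2)^2$.

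The hard part will be bounding the $h\neq 0$ error in the cross piece. Here I would apply Cauchy--Schwarz over $(m_1,m_2)$ against the smoothly weighted coefficients $a(m_1)a(m_2)$, which removes the coefficients and leaves a fourth power of Kloosterman sums averaged over the duals $b_i$ of the products $m_1 m_2$ and $m_1' m_2'$. This is exactly the quantity $\mathrm{A}(q)$ controlled by Theorem \ref{le:1}, applied with $B$ of size $\asymp MR = q^{\theta_1+\theta_2}$. Balancing each of the three terms $B^4 q^{5/2}\mathring q^{-1/2}$, $B^2 q^3\mathring q$, and $B\,q^{7/2}\mathring q^{1/2}$ against the trivial size $\vp^*(q)^2 M R$ of the original cross sum, and absorbing the remaining factor $q^{1+\ve}$ from the $n_1$-sum and $h$-sum truncations, yields the three admissibility conditions
\begin{equation*}
4\theta_1+6\theta_2<3+\gamma,\qquad 2\theta_1+\theta_2<1-\gamma,\qquad 4\theta_1<1-\gamma,
\end{equation*}
respectively. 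The crucial point, and the central novelty compared to \cite{KMN2022}, is that averaging over the $b_i$ is now what produces the saving (since individual bounds such as \eqref{eqFkloosterman} are unavailable for general moduli), which is why larger $\theta_2$ and in particular $\theta_2>\theta_1$ are favored; verifying that these conditions are compatible uniformly in $\gamma\in[0,\tfrac13]$ with a genuine $\log\log q/\log q$ error is the delicate bookkeeping that concludes the proof.
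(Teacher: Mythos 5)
Your high-level plan matches the paper: split $\mathcal{M}_2$ into the two pure pieces (handled by Iwaniec--Sarnak, giving $c_1^2(1+\theta_1^{-1})+c_2^2(1+\theta_2^{-1})$) plus a cross piece whose $m_1m_2n_1=1$ terms supply the extra $2c_1c_2$, and bound the error in the cross piece by reducing it to the quartic Kloosterman average of Theorem~\ref{le:1}. However, two central technical steps are wrong in a way that would prevent you from recovering the stated conditions \eqref{eqtheta12}.

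First, the inequality that produces a \emph{fourth} power of Kloosterman sums is not Cauchy--Schwarz over $(m_1,m_2)$; it is H\"older with exponents $\tfrac43$ and $4$. After Poisson summation in $n_1$, the cross-piece error has the shape $\sum_{h}\nu(h)\big|\sum_{m_2}\beta_{m_2}S(h,\overline{m}_2;q_2)\big|$, where $h=kn_2\overline{m_1q_1}$ has already absorbed $k$, $n_2$, \emph{and} $m_1$, and $\nu(h)$ counts representations. H\"older then gives $\big(\sum_h\nu(h)^{4/3}\big)^{3/4}\big(\sum_h|\sum_{m_2}\beta_{m_2}S(h,\overline{m}_2;q_2)|^4\big)^{1/4}$; Cauchy--Schwarz over $(m_1,m_2)$ would only produce a second moment of Kloosterman sums, and there is no clean way to turn that into the quartic average that Theorem~\ref{le:1} controls.

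Second, and as a consequence, the range $B$ in the application of Theorem~\ref{le:1} is $B\asymp M_2\ll R=q^{\theta_2}$, not $B\asymp MR=q^{\theta_1+\theta_2}$. The $b_i$ in $G_{q_2}(b_1,b_2,b_3,b_4)$ are four copies of $m_2$-type variables coming from expanding the fourth power, while $M_1\ll q^{\theta_1}$ enters the estimate entirely through $\sum_h\nu(h)^{4/3}\ll\sum_h\nu(h)^2\ll\tfrac{M_1N_2q_2^{1+\ve}}{N_1}\big(1+\tfrac{M_1N_2}{N_1}\big)$. Feeding $B=q^{\theta_1+\theta_2}$ into Theorem~\ref{le:1} produces exponents in $\theta_1$ and $\theta_2$ that do not match \eqref{eqtheta12} (for instance the $B^4$ term would give a condition with equal weights on $\theta_1,\theta_2$ rather than the asymmetric $4\theta_1+6\theta_2$). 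You should also make explicit the case split that underlies the error analysis: when $\tfrac{N_2}{N_1}\gg\tfrac{M_1M_2}{q^{1+\ve}}$ one uses the Michel--VanderKam bound (Lemma~\ref{le:2}, summing the $n_2$-exponential directly), and only in the complementary range does one Poisson in $n_1$ and invoke the quartic Kloosterman estimate via Lemma~\ref{eb1}.
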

The proof of Proposition \ref{pro:2} is deferred to Section \ref{secpro:2}. At the end of this section, we present the proof of Theorem \ref{1.1} based on these two propositions.
\begin{proof}[Proof of Theorem $\ref{1.1}$]
After substituting Propositions \ref{pro:1} \& \ref{pro:2} into \eqref{eqratio},
the problem reduces to maximizing the ratio
\begin{align*}
	\Bigg(\left(\frac{c_1}{c_1+c_2}\right)^2{\theta_1}^{-1}+\left(\frac{c_2}{c_1+c_2}\right)^2{\theta_2}^{-1}+1\Bigg)^{-1}
\end{align*}
with suitable coefficients $c_1$ and $c_2$.
By the Cauchy-Schwarz inequality, the ratio attains its maximum
\begin{equation*}
	\left(1+\frac1{\theta_1+\theta_2}\right)^{-1}
\end{equation*}
when ${c_1}:{c_2}={\theta_1}:{\theta_2}$.

Furthermore, for $\theta_1, \theta_2$ in the range \eqref{eqtheta12}, an analysis based on linear optimization theory demonstrates that the maximum can be achieved by choosing $\theta_1$ and $\theta_2$ as follows.
If $0\leq\gamma\leq\frac15$, we set
\begin{align*}
	\theta_1=\frac{1-\gamma}{4}-\ve,\ \ \ \theta_2=\frac{1+\gamma}{3}-\ve
\end{align*}
to obtain
\begin{equation*}
	\left(1+\frac1{\theta_1+\theta_2}\right)^{-1}=\frac{7+\gamma}{19+\gamma}-\ve\ge\frac{7}{19}-\ve.
\end{equation*}
If $\frac15<\gamma\leq\frac13$, we set
\begin{align*}
	\theta_1=\frac{3-7\gamma}{8}-\ve,\ \ \ \theta_2=\frac{1+3\gamma}{4}-\ve
\end{align*}
to obtain
\begin{equation*}
	\left(1+\frac1{\theta_1+\theta_2}\right)^{-1}=\frac{5-\gamma}{13-\gamma}-\ve\ge\frac{7}{19}-\ve.
\end{equation*}
This completes the proof of Theorem \ref{1.1}.
\end{proof}
\section{The mollified second moment}\label{secpro:2}
The treatment of the mollified second moment follows the approach developed in the series of works \cite{Khan2016}, \cite{KMN2022}, and \cite{Michel}, with our new contribution being the application of Theorem \ref{le:1} to extend the length of the mollifier. Upon substituting the mollifier $\eqref{mollifier}$ into $\eqref{Second-moment}$, we arrive at
\[
\mathcal{M}_2=\mathcal{M}_{21}+\mathcal{M}_{22},
\]
where
\begin{align}
	\mathcal{M}_{21}&:=\frac{2c_1^2}{\vp^*(q)} \sump_{\chi\ppmod{q}} \vert L(\thalf,\chi)\vert^2 \Big\vert\sum_{m\le M} \frac{a(m) \chi(m)}{m^{\half}}\Big\vert^2 + \frac{2c_2^2}{\vp^*(q)} \sump_{\chi\ppmod{q}} \vert L(\thalf,\chi)\vert^2 \Big\vert\sum_{m\le R } \frac{a(m) \chi(m)}{m^{\half}}\Big\vert^2,\nonumber\\
	\mathcal{M}_{22}&:=\frac{4c_1c_2}{\vp^*(q)}\sump_{\chi\ppmod{q}}\vert L(\tfrac12,\chi)\vert^2 \frac{{\tau}_{\chi}}{q^\half} \sum_{\substack{m_1 \le M\\ m_2\le R}}\frac{a(m_1)a(m_2) \chi(m_1)\chi(m_2)}{(m_1m_2)^{\half}}.\nonumber
\end{align}

From Iwaniec and Sarnak \cite[Equations (5.5) \& (6.16)]{Iwaniec1999}, we derive the following asymptotic formula
\[
\frac{2}{\vp^*(q)} \sump_{\chi\ppmod{q}} \vert L(\thalf,\chi)\vert^2 \Big\vert\sum_{m\le q^\theta} \frac{a(m) \chi(m)}{m^{\half}}\Big\vert^2=1+\theta^{-1} +O\left(\frac{\log\log q}{\log q}\right)
\]
for $\theta<\frac12$. Consequently, we have
\begin{equation}
	\mathcal{M}_{21}={c_1}^2(1+\theta_1^{-1})+{c_2}^2(1+\theta_2^{-1})+O\left(\frac{\log\log q}{\log q}\right).
\end{equation}

The length of the mollifier is limited by the evaluation of $\mathcal{M}_{22}$. Michel and VanderKam \cite{Khan2016} deduced an asymptotic for $\mathcal{M}_{22}$ with $M,R\ll q^{\frac14-\ve}$. For the case of prime moduli, Khan and Ngo \cite{Khan2016} extended the admissible length to $M,R\ll p^{\frac3{10}}$, and later, in collaboration with Mili{\'c}evi{\'c}, they further extended the range to $M\ll p^{\frac38-\ve}$ and $R\ll p^{\frac14-\ve}$.

To evaluate $\mathcal{M}_{22}$ we need the following approximate functional equation (see, for example, \cite[Equation (3)]{Michel} or \cite[Equation (2-2)]{Khan2016})
\begin{align}\label{L2}
	\vert L(\thalf,\chi)\vert^2= 2\sum_{n_1,n_2\ge 1} \frac{\chi(n_1)\bar{\chi}\left(n_2\right)}{(n_1n_2)^\half}V\left(\frac{n_1n_2}{q}\right),
\end{align}
where
\begin{align*}
	V(x)=\frac{1}{2\pi i} \int_{(2)} \frac{\Gamma(\frac{s}{2}+\frac{1}{4})^2}{\Gamma(\frac{1}{4})^2}(\pi x)^{-s} \frac{ds}{s}
\end{align*}
is a smooth function, which decays rapidly for $x\gg q^\ve$, and is approximately equal to $1$ for $x<1$.
After applying the approximate functional equation, we expand $\mathcal{M}_{22}$ as a series and further eliminate the character using the identity (see \cite[Equation (17)]{Michel}): for $\left(m,q\right)=1$,
\begin{align}
	\sump_{\chi}\tau_{\chi}\chi(m)=\sum_{\substack{q_1q_2=q\\(q_1,q_2)=1}}\mu^{2}(q_1)\varphi(q_2)\cos\(2\pi\frac{\overline{mq_1}}{q_2}\),
\end{align}
to obtain
\begin{align}\label{eqSseries}
\mathcal{M}_{22}=&\frac{8c_1c_2}{\vp^*(q)}\sum_{\substack{q_1q_2=q\\(q_1,q_2)=1}}\mu^{2}(q_1)\varphi(q_2)\\
&\times\sums_{\substack{n_1,n_2\\m_1\le M\\m_2\le R}}\frac{a(m_1)a(m_2)}{(qn_1n_2m_1m_2)^\frac12}V\(\frac{n_1n_2}{q}\)\cos\(2\pi\frac{n_2\overline{n_1m_1m_2q_1}}{q_2}\).\notag
\end{align}
It is shown in \cite[Section 6.1]{Michel} that the main term on the right-hand side of \eqref{eqSseries} arises from the contribution of the terms with $m_1m_2n_1=1$, which yields
\[
2c_1c_2+O\(q^{-\ve}\).
\]

For the remaining terms with $m_1m_2n_1>1$, we employ a dyadic partition of unity to reduce the problem to bounding the following sum: for any complex  $(\alpha_{m_1})_{m_1\in[M_1,2M_1]}$ and $(\beta_{m_2})_{m_2\in[M_2,2M_2]}$ satisfying $\alpha_{m_1}\ll m_1^\ve$, $\beta_{m_2}\ll m_2^\ve$,
\begin{align}
		\mathcal{B}(M_1,M_2,N_1,N_2)=&\frac1{q^\frac32(M_1M_2N_1N_2)^\half}\sum_{\substack{q_1q_2=q\\ \left(q_1,q_2\right)=1}}\mu^{2}(q_1)\varphi(q_2)\sum_{m_1, m_2}\alpha_{m_1}\beta_{m_2}\nonumber\\
&\times \sums_{\substack{n_1,n_2\\m_1m_2n_1>1}}V\left(\frac{n_1n_2}{q}\right)W_1\left(\frac{n_1}{N_1}\right) W_2\left(\frac{n_2}{N_2}\right)
e\left(\frac{n_2\overline{{n_1}{m_1}{m_2}{q_1}}}{q_2}\right)\nonumber
\end{align}
with $N_1N_2\le q^{1+\ve}, M_1 \le \frac{M+1}{2}, M_2 \le \frac{R+1}{2}$, where $W_1$ and $W_2$ are smooth functions compactly supported on the interval $[\frac12,2]$.

The summation over $n_2$ is a normal exponential sum, and we may expect a cancelation when $N_2$ is large.
\begin{lemma}\label{le:2}
	Let $q$ be a positive integer. For $M_1M_2< q^{1-\ve}$, we have
	\begin{align*}
		\mathcal{B}(M_1,M_2,N_1,N_2) \ll q^\ve \Big(\frac{M_1M_2 N_1}{qN_2}\Big)^{1/2} + q^{-\ve}.
	\end{align*}
\end{lemma}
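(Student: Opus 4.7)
The plan is to apply Poisson summation on the variable $n_2$ modulo $q_2$, exploiting the oscillation of the phase $e(n_2 \overline{n_1 m_1 m_2 q_1}/q_2)$ against the smooth weight $F(n_1,n_2) := V(n_1 n_2/q) W_2(n_2/N_2)$, which has scale $N_2$ in $n_2$. After a standard handling of the coprimality condition $(n_2,q)=1$ (via M\"obius inversion $\mathbf{1}_{(n_2,q)=1}=\sum_{d\mid (n_2,q)}\mu(d)$), Poisson rewrites the $n_2$-sum as a dual sum essentially of the form
\[
\sum_{h\,\equiv\,-\overline{n_1 m_1 m_2 q_1}\,(q_2)} \widehat{F}(h/q_2),
\]
where $\widehat F$ denotes the Fourier transform in $n_2$ (with $n_1$ as parameter) and satisfies $|\widehat F(y)| \ll_A N_2(1+N_2|y|)^{-A}$ for any $A>0$ by repeated integration by parts. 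Hence only those $h$ whose residue modulo $q_2$ has least-absolute-value representative of size $\le q_2 q^\varepsilon/N_2$ contribute non-negligibly; all other $h$ contribute $O(q^{-A})$ and are absorbed into the stated $q^{-\varepsilon}$ error.

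For the main contribution, the size restriction on $h$ combined with the congruence pins $n_1$, for each fixed quadruple $(m_1,m_2,q_1,q_2)$, to $O(q_2 q^\varepsilon/N_2)$ residue classes modulo $q_2$. In the range $n_1\in[N_1,2N_1]$ this yields at most $O((N_1/q_2+1)(q_2 q^\varepsilon/N_2)) = O((N_1+q_2)q^\varepsilon/N_2)$ admissible values of $n_1$. Bounding each surviving $\widehat F(h/q_2)\ll N_2$ and each coefficient $\alpha_{m_1},\beta_{m_2}\ll q^\varepsilon$ trivially, and summing over $m_1,m_2$ and the coprime factorizations $q_1 q_2=q$ with weight $\varphi(q_2)$ (using $\sum_{q_2\mid q}\varphi(q_2)=q$), one collects a contribution bounded by $q^\varepsilon (M_1 M_2 N_1/(qN_2))^{1/2}$ after dividing by the normalization $q^{3/2}(M_1 M_2 N_1 N_2)^{1/2}$.

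I anticipate the main technical obstacle to be twofold. First, the degenerate factorization $q_2=1$ has to be handled separately: the exponential phase is trivial there, so Poisson offers no cancellation; the restriction $m_1 m_2 n_1>1$ together with the hypothesis $M_1 M_2 < q^{1-\varepsilon}$ are exactly what is needed to keep this contribution below $q^{-\varepsilon}$ using the rapid decay of $V$ beyond $n_1 n_2 \gg q^{1+\varepsilon}$. Second, the bookkeeping of the M\"obius corrections from $(n_2,q)=1$ produces secondary Poisson sums of similar shape but with shifted moduli $q_2/\gcd(d,q_2)$ for $d\mid q$, and one must verify that these all contribute at most the same order as the main term, so that the divisor sum $\sum_{q_2\mid q}\varphi(q_2)(N_1/q_2 + 1)$ telescopes to match the claimed bound.
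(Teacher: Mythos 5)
Your overall strategy — Poisson summation in $n_2$ modulo $q_2$, truncating the dual sum by the rapid decay of $\widehat{F}$, and then counting the admissible $n_1$ — is the right one, and since the paper's ``proof'' is merely a citation of \cite[Equation (27)]{Michel}, re-deriving it is reasonable. However, the bookkeeping as you describe it does \emph{not} yield the stated bound. You estimate the number of admissible $n_1\in[N_1,2N_1]$ by $O\big((N_1/q_2+1)(q_2 q^\ve/N_2)\big)=O\big((N_1+q_2)q^\ve/N_2\big)$ and bound each surviving $\widehat F$ by $N_2$, so your inner $(n_1,n_2)$-sum is $\ll (N_1+q_2)q^\ve$. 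Summing this against $\mu^2(q_1)\varphi(q_2)$ over the factorizations $q_1q_2=q$ gives $\sum_{q_2\mid q}\varphi(q_2)(N_1+q_2)=qN_1+O(q^{2+\ve})$, \emph{not} $qN_1$; the identity $\sum_{q_2\mid q}\varphi(q_2)=q$ you invoke controls only the $N_1$-piece. After dividing by $q^{3/2}(M_1M_2N_1N_2)^{1/2}$ and multiplying by $M_1M_2$, the leftover $q^{2+\ve}$-piece contributes $q^\ve\big(M_1M_2\,q/(N_1N_2)\big)^{1/2}$, which in the permitted regime $N_1N_2\asymp q$, $M_1M_2\asymp q^{1-\ve}$ is of size $q^{1/2}$ — nowhere near the claimed $q^\ve(M_1M_2N_1/(qN_2))^{1/2}+q^{-\ve}$. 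So the assertion that ``one collects a contribution bounded by $q^\ve(M_1M_2N_1/(qN_2))^{1/2}$'' does not follow from the estimates you wrote down. What is missing: the $n_1$-count must be capped at $N_1$ (the ``$+1$'' is relevant only when $q_2>N_1$, and then the count is $\ll\min\{N_1,\,q_2q^\ve/N_2\}$, not $q_2q^\ve/N_2$); one should also exploit the additional average over $m_1,m_2$, so that the relevant count of triples $(n_1,m_1,m_2)$ in a fixed residue class mod $q_2$ is $\ll q^\ve(N_1M_1M_2/q_2+1)$ by a divisor argument; and the result must be played off against the trivial bound $\ll q^\ve(M_1M_2N_1N_2)^{1/2}/q^{1/2}$ in the complementary range. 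Your final sentence about the divisor sum ``telescoping'' glosses over precisely these steps.

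Separately, you misdiagnose the degenerate case. For $q_2=1$ (which requires $q$ square-free, since $\mu^2(q_1)=\mu^2(q)$), the contribution is already $\ll q^\ve(M_1M_2N_1N_2)^{1/2}/q^{3/2}\ll q^{-1/2+\ve}$ by the trivial bound alone, with no need for $m_1m_2n_1>1$. That hypothesis is instead essential at the \emph{opposite} extreme $q_2=q$, $q_1=1$: the single term $m_1=m_2=n_1=1$ has $\overline{n_1m_1m_2q_1}\equiv 1$, so the phase $e(n_2/q)$ is essentially non-oscillatory over $n_2\in[N_2,2N_2]$ when $N_2\le q$, and this one term alone contributes on the order of $\varphi(q)N_2/\big(q^{3/2}(N_1N_2)^{1/2}\big)\asymp(N_2/(qN_1))^{1/2}$, which can be of size $1$. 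Removing $m_1m_2n_1=1$ is exactly what eliminates this; it is not a device for taming $q_2=1$.
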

\begin{proof}
	This is given by \cite[Equation (27)]{Michel}.
\end{proof}

If $N_1$ is larger than $q^{\frac12}$, we can exploit the Weil bound to save in the sum over $n_1$. Combining this with Lemma \ref{le:2} allows one to achieve $\theta<\frac14$. To further extend $\theta$,
we deduce a new bound based on Theorem \ref{le:1}.
\begin{lemma}\label{eb1}
	Let $q$ be a positive integer. For $M_1M_2\ll q^{2-\ve}$, we have
	\begin{align*}
\mathcal{B}(M_1,M_2,N_1,N_2) \ll q^\ve\Bigg(\left(\frac{M_1N_2}{N_1}\right)^\frac14+\frac{M_1N_2}{N_1}\Bigg) \left(\({q\mathring{q}}\)^{-\frac18}{M_2}^{\half}+{\mathring{q}}^{\frac14} +\({q\mathring{q}}\)^{\frac18}{M_2}^{-\frac14}\right)+ q^{-\ve}.
	\end{align*}
\end{lemma}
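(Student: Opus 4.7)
The plan is to extend the strategy of Khan--Mili\'cevi\'c--Ngo \cite{KMN2022} from the prime-moduli case to general moduli $q$, with the crucial new ingredient being the quartic Kloosterman-sum average from Theorem~\ref{le:1} replacing the pointwise bound \eqref{eqFkloosterman}. For each factorization $q=q_1q_2$ with $(q_1,q_2)=1$, I would first detach the $n_2$-sum, remove the coprimality to $q$ by M\"obius, and apply Poisson summation modulo $q_2$. Since the weight $V(n_1n_2/q)W_2(n_2/N_2)$ localises $n_2$ near $N_2$, the dual variable, say $h$, is effectively truncated to $|h|\ll q_2^{1+\varepsilon}/N_2$, and the surviving phase becomes $e(h\,\overline{n_1m_1m_2q_1}/q_2)$ against a smooth transform of the weight.

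Next I would apply two successive Cauchy--Schwarz inequalities to prepare for Theorem~\ref{le:1}. First, moving the coefficient $\alpha_{m_1}$ outside and squaring over the pair $(m_1,n_1)$ produces a fourfold sum over $(m_1,n_1,m_1',n_1')$ tensored with an $(m_2,h)$-sum of a product of two complex exponentials modulo $q_2$. A second Poisson summation in $m_2$ modulo $q_2$ then completes that sum to a complete residue sum, whose $h$-parametrised inner expression collapses, after a change of variables, to the quartic Kloosterman product appearing in $G_{q_2}(b_1,b_2,b_3,b_4)$. This places the inner sum exactly in the shape of $\mathrm{A}(q_2)$ from Theorem~\ref{le:1}, with the $b_i$ running over an interval of length $B$ of order $M_2$ (in coordinates arising from products of the $n_j m_j \overline{q_1}$-type factors).

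Invoking Theorem~\ref{le:1} with $B\asymp M_2$ supplies the three-term bound $B^4q_2^{5/2}\mathring{q}_2^{-1/2}+B^2q_2^{3}\mathring{q}_2+Bq_2^{7/2}\mathring{q}_2^{1/2}$. Taking a fourth root (from the two Cauchy--Schwarz steps) and repackaging with the normalising prefactor $q^{-3/2}(M_1M_2N_1N_2)^{-1/2}$ reproduces the three summands inside the second bracket of the claimed inequality. The prefactor $(M_1N_2/N_1)^{1/4}+M_1N_2/N_1$ in the first bracket emerges from balancing the off-diagonal contribution controlled by Theorem~\ref{le:1} (responsible for the fourth-root term) against the diagonal $(m_1,n_1)=(m_1',n_1')$, whose trivial estimate produces the linear term. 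The summation over factorisations $q=q_1q_2$ contributes only a divisor factor absorbed into~$q^\varepsilon$.

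The principal obstacle is the arithmetic bookkeeping between the two Poisson steps: one must verify that after dualising $n_2$ and $m_2$, the surviving residues obey the coprimality conditions required in the definition of $G_{q_2}$, while the primes of $q$ appearing to odd exponent (encoded in $\mathring q$) enter in precisely the form dictated by Theorem~\ref{le:1}. A further subtlety is the treatment of the degenerate tuples $(b_1,b_2,b_3,b_4)$ where no $b_i$ is distinct from the others modulo $q_2$, for which Theorem~\ref{le:1} is not directly stated; these must be isolated and separately shown to satisfy the same estimate through the third summand of $\mathrm A(q_2)$. The remaining regime, in which $q_2$ is so small that Poisson does not gain, is handled by the trivial Weil bound and yields the negligible error term $q^{-\varepsilon}$ in the lemma.
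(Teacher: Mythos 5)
Your plan diverges from the paper's at the very first step, and in a way that I believe does not work. The paper applies Poisson summation to $n_1$, not $n_2$. The whole point is that the phase $e(n_2\overline{n_1 m_1 m_2 q_1}/q_2)$ depends on $\overline{n_1}$, so after splitting $n_1$ into residue classes modulo $q_2$ and dualising the long variable, the complete sum over the residue of $n_1$ modulo $q_2$ produces a genuine Kloosterman sum $S(kn_2,\overline{m_1 m_2 q_1};q_2)$, with the dual variable $k$ truncated to $k\ll q_2^{1+\ve}/N_1$ (note: $N_1$, not $N_2$). The phase is \emph{linear} in $n_2$; applying Poisson to $n_2$ modulo $q_2$ would complete $\sum_{r\ppmod{q_2}}e_{q_2}(r(a+h))$ with $a=\overline{n_1m_1m_2q_1}$, which is a Ramanujan sum (a divisor-detecting congruence), not a Kloosterman sum, and your claim that the ``surviving phase becomes $e(h\,\overline{n_1m_1m_2q_1}/q_2)$'' is simply not what Poisson does to a linear phase. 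That is a genuine gap: the architecture you describe would not manufacture the quartic Kloosterman product that Theorem~\ref{le:1} is designed to bound.

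The subsequent steps also don't match. After the (correct) Poisson in $n_1$, the paper fuses $(k,n_2,m_1)\mapsto h=kn_2\overline{m_1q_1}\ppmod{q_2}$ using the homogeneity $S(kn_2,\overline{m_1m_2q_1};q_2)=S(h,\overline{m}_2;q_2)$, and then applies a \emph{single} H\"older inequality with exponents $(\tfrac43,4)$ separating the multiplicity $\nu(h)$ from the fourth moment $\sum_h|\sum_{m_2}\beta_{m_2}S(h,\overline{m}_2;q_2)|^4$. The factor $(M_1N_2/N_1)^{1/4}+M_1N_2/N_1$ in the lemma arises from the elementary count $\sum_h\nu(h)^2\ll (M_1N_2q_2^{1+\ve}/N_1)(1+M_1N_2/N_1)$ of coincidences $km_1n_2\equiv k'm_1'n_2'\ppmod{q_2}$, not from a diagonal/off-diagonal split in Cauchy--Schwarz. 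There is no second Poisson in $m_2$; the fourth power is expanded directly to give $\sums_{b_1,\ldots,b_4\le 2M_2}|G_{q_2}(\vec b)|$, and Theorem~\ref{le:1} applies to the full average including degenerate tuples, so no separate treatment of those is needed. Finally, the paper exploits $\mathring{q}_2=\mathring{q}$ (valid since $q_1$ is square-free and $(q_1,q_2)=1$) to recombine over divisors $q_1q_2=q$, a point your sketch omits. You should restart from Poisson in $n_1$, combine variables, and apply H\"older as above.
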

\begin{proof}
We loosely follow the strategy of \cite[Lemma 3.2]{Khan2016} or \cite[Lemma 4.3]{KMN2022}.
Separating $n_1$ into residue classed modulo $q_2$ and then applying the Poisson summation formula, we obtain
\begin{align}\label{Eq2}
\mathcal{B}(M_1,M_2,N_1,N_2)&=\frac{N_1^{\frac12}}{q^\frac32(M_1M_2N_2)^\half}\sum_{\substack{q_1q_2=q\\ \left(q_1,q_2\right)=1}}\frac{\mu^{2}(q_1)\varphi\left(q_2 \right)}{q_2}\\
&\sum_{m_1,m_2}\alpha_{m_1}\beta_{m_2}\sums_{n_2}\sum_k W_2\left(\frac{n_2}{N_2}\right)F(k)S(kn_2, \overline{m_1m_2q_1}; q_2),\notag
\end{align}
where
	\begin{equation*}
		F(k)=\int_{-\infty}^{\infty}W_1(u)V\left(\frac{uN_1n_2}{q}\right)e\left(-\frac{kN_1u}{q_2}\right)du
	\end{equation*}
	decays rapidly when $k\gg{q_2}^{1+\ve}/{N_1}$.
	
For $k=0$, the Kloosterman sum reduces to a Ramanujan sum. Trivially estimating all other sums shows that the contribution of the terms with $k=0$ is not exceeding $O\left(\frac{\left(M_1M_2\right)^\half}{q^{1-\ve}}\right)$, which is acceptable.
	
For the terms with $k\neq0$, we combine variables by setting $h=kn_2\overline{m_1q_1}$. By H\"older inequality, its contribution does not exceed
\begin{equation*}
		\frac{N_1^\half}{q^\frac32(M_1M_2N_2)^\half}\sum_{\substack{q_1q_2=q\\ \left(q_1,q_2\right)=1}}\frac{\mu^{2}(q_1)\varphi\left(q_2 \right)}{q_2}\Bigg(\sum_{h\ppmod {q_2}}\nu(h)^{\frac43}\Bigg)^{\frac34}\Bigg(\sum_{h\ppmod {q_2}}\left|\sum_{m_2} \beta_{m_2} S\left(h,\overline{m}_2;q_2\right)\right|^4\Bigg)^{\frac14},
	\end{equation*}
where $\nu(h)$ denotes the number of representations of writing $h$ as $kn_2\overline{m_1q_1}$ modulo $q_2$. Note that
\begin{align*}
\sum_{h\ppmod {q_2}}\nu(h)^{\frac43}\ll \sum_{h\ppmod {q_2}}\nu(h)^2\ll \sum_{km_1n_2\equiv k'm_1'n_2' \ppmod{q_2}}1\ll \frac{M_1N_2q_2^{1+\ve}}{N_1}\(1+\frac{M_1N_2}{N_1}\).
\end{align*}
Expanding the $4$-th power of the sum of Kloosterman sums yields
\[
\sum_{h\ppmod {q_2}}\left|\sum_{m_2} \beta_{m_2} S\left(h,\overline{m}_2;q_2\right)\right|^4\ll M_2^\ve\sums_{1\leq b_1,b_2,b_3,b_4\leq 2M_2}\left\vert G_{q_2}(b_1,b_2,b_3,b_4)\right\vert.
\]
We handle the right-hand side using Theorem \ref{le:1}. Combining these results and observing that $\mathring{q}_2=\mathring{q}$ (since $q=q_1q_2$ with $q_1$ square-free and $(q_1,q_2)=1$), we conclude the Lemma.
\end{proof}
By virtue of Lemmas \ref{le:2} \& \ref{eb1}, we conclude this section by giving the proof of Proposition \ref{pro:2}, and the rest of this paper is devoted to the proof of Theorem \ref{le:1}.
\begin{proof}[Proof of Proposition \ref{pro:2}]
From the discussions above, it suffices to prove
\begin{align}\label{eqBve}
\mathcal{B}(M_1,M_2,N_1,N_2) \ll q^{-\ve}
\end{align}
for $M_1\ll M\ll q^{\theta_1}$, $M_2\ll R\ll q^{\theta_2}$, and $N_1N_2\ll q^{1+\ve}$.
The bound in Lemma \ref{le:2} indicates \eqref{eqBve} when $\frac{N_2}{N_1}\gg\frac{M_1M_2}{q^{1+\ve}}$. For the other case, our choice of $\theta_1, \theta_2$ in the range \eqref{eqtheta12} ensures
\[
\frac{M_1N_2}{N_1}\ll \frac{M_1^2M_2}{q^{1+\ve}}\ll 1.
\]
Then, by Lemma \ref{eb1}, we have
\begin{align*}
	\mathcal{B}(M_1,M_2,N_1,N_2) \ll q^{-\frac38+\ve}{M}^{\frac12}{R}^{\frac34}{\mathring{q}}^{-\frac18}+q^{-\frac14+\ve}{M}^{\frac12}{R}^{\frac14}{\mathring{q}}^{\frac14}+q^{-\frac18+\ve}{M}^{\frac12}{\mathring{q}}^{\frac18}.
\end{align*}
Given $\theta_1, \theta_2$ in \eqref{eqtheta12}, \eqref{eqBve} follows immediately.
\end{proof}
\section{Initial treatment of $G_{q}$}\label{secG}
We now proceed to the proof of Theorem $\ref{le:1}$. First, we establish preliminary estimates for $G_q$.
Owing to the multiplicativity of Kloosterman sums, $G_{q}$ satisfies the multiplicative relation.
\begin{lemma}
For $q=q_1q_2$ with $\left( q_1,q_2\right)=1$, we have
\[
G_{q}\left( b_1,b_2,b_3,b_4\right)=G_{q_1}\left( b_1,b_2,b_3,b_4\right)G_{q_2}\left( b_1,b_2,b_3,b_4\right).
\]
\end{lemma}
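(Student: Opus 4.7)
The plan is to reduce the claim to the twisted multiplicativity of the classical Kloosterman sum, combined with the Chinese Remainder Theorem and two elementary changes of variable. Recall that for $(q_1,q_2)=1$ we have the identity
\[
S(a,b;q_1q_2)=S(a\bar q_2,b\bar q_2;q_1)\,S(a\bar q_1,b\bar q_1;q_2),
\]
where $\bar q_2$ (resp.\ $\bar q_1$) denotes the inverse of $q_2$ modulo $q_1$ (resp.\ of $q_1$ modulo $q_2$). Applying this to each of the four Kloosterman factors in $G_q(\vec b)$, the integrand factorises as
\[
\prod_{i=1}^{4}S(h\bar q_2,\bar b_i\bar q_2;q_1)\cdot\prod_{i=1}^{4}S(h\bar q_1,\bar b_i\bar q_1;q_2).
\]

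Next I would split the outer summation $\sum_{h\pmod{q}}$ via CRT into a double sum $\sum_{h_1\pmod{q_1}}\sum_{h_2\pmod{q_2}}$, with $h\equiv h_i\pmod{q_i}$. The two inner products then decouple, and the sum becomes the product of
\[
\sum_{h_1\pmod{q_1}}\prod_{i=1}^{4}S(h_1\bar q_2,\bar b_i\bar q_2;q_1)
\quad\text{and}\quad
\sum_{h_2\pmod{q_2}}\prod_{i=1}^{4}S(h_2\bar q_1,\bar b_i\bar q_1;q_2).
\]
It remains to identify each factor with the corresponding $G_{q_j}(\vec b)$.

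For this last step I would use a change of variable inside each Kloosterman sum to remove the twists on the $\bar b_i$'s. Substituting $x\mapsto\bar q_2 x$ in $S(h_1,\bar b_i\bar q_2;q_1)=\sum_{x}e_{q_1}(h_1x+\bar b_i\bar q_2\bar x)$ yields $S(h_1\bar q_2,\bar b_i;q_1)$ (the factor $q_2$ from $\bar x\mapsto q_2\bar x$ cancels the twist $\bar q_2$ on $\bar b_i$). Then a second change of variable $h_1\mapsto q_2 h_1$, which is a bijection modulo $q_1$ since $(q_1,q_2)=1$, produces exactly $G_{q_1}(\vec b)$. The same argument with the roles of $q_1,q_2$ swapped yields $G_{q_2}(\vec b)$ for the second factor, completing the proof.

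The argument is essentially mechanical; the only point requiring care is the bookkeeping of inverses, ensuring that the twisting factors $\bar q_1,\bar q_2$ introduced by the multiplicativity formula are all absorbed by the two changes of variable. In particular, one should verify at the outset that the summand is supported on $(h,q)=1$ (so that $h_1,h_2$ can be treated as running over units modulo $q_1,q_2$ respectively, matching the definition of $G_{q_j}$), which follows from the fact that $S(h,\bar b;q)$ vanishes unless $(h,q)$ divides $\bar b$ and the $b_i$ are coprime to $q$ by the $\sums$ convention.
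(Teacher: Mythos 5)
Your proof is correct and follows essentially the same route as the paper: twisted multiplicativity of Kloosterman sums, then CRT to split the $h$-sum, then a change of variable to identify each factor with $G_{q_j}$. The paper streamlines the bookkeeping by writing the multiplicativity identity with both twists on the first argument, $S(h,\bar b_i;q)=S(h\bar q_2^2,\bar b_i;q_1)S(h\bar q_1^2,\bar b_i;q_2)$, so that only a single substitution $h_j\bar q_{3-j}^2\mapsto h_j$ is needed at the end; your version uses the more familiar symmetric form and therefore needs the extra inner substitution $x\mapsto\bar q_2 x$ to move the twist off $\bar b_i$. The two are equivalent, and your presentation is fine.

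One remark about your last paragraph: the verification you propose there is both unnecessary and based on a false premise. The definition of $G_q$ in the paper sums over \emph{all} $h\pmod q$, with no coprimality restriction, so nothing needs to match up regarding "units modulo $q_j$"; the CRT bijection $h\leftrightarrow(h_1,h_2)$ and the substitutions $h_j\mapsto h_j\bar q_{3-j}^2$ are bijections on all residue classes modulo $q_j$, since $\bar q_{3-j}$ is a unit. Moreover, the claimed vanishing criterion is wrong: $S(h,\bar b;q)$ does \emph{not} vanish when $(h,q)>1$ in general (e.g.\ $S(0,1;p)=-1$ is a Ramanujan sum). The paper does use a vanishing statement of this flavour, but only later and only for $p\mid h$ with modulus $p^{2k}$, $k\ge 1$, which is a different phenomenon. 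None of this affects the validity of the main argument, which stands once that paragraph is deleted.
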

\begin{proof}
The multiplicativity of Kloosterman sums yields
\[
S(h,\ol{b}_i;q)=S(h\ol{q}_2^2,\ol{b}_i;q_1)S(h\ol{q}_1^2,\ol{b}_i;q_2)
\]
for $i=1,2,3,4$.
This in combination with the Chinese Remainder Theory shows 
\begin{align*}
G_{q}\left( b_1,b_2,b_3,b_4\right)=&
\sum_{h_1\ppmod{q_1}}S(h_1\ol{q}_2^2,\bar{b}_1;q_1)S(h_1\ol{q}_2^2,\bar{b}_2;q_1)S(h_1\ol{q}_2^2,\bar{b}_3;q_1)S(h_1\ol{q}_2^2,\bar{b}_4;q_1)\\
&\times \sum_{h_2\ppmod{q_2}}S(h_2\ol{q}_1^2,\bar{b}_1;q_2)S(h_2\ol{q}_1^2,\bar{b}_2;q_2)S(h_2\ol{q}_1^2,\bar{b}_3;q_2)S(h_2\ol{q}_1^2,\bar{b}_4;q_2)
\end{align*}
Then the lemma follows by making the variable changes $h_1\ol{q}_2^2\rightarrow h_1$ and $h_2\ol{q}_1^2\rightarrow h_s$. 
\end{proof}
By the multiplicativity relation and the Chinese Remainder Theorem, it suffices to consider prime power moduli. Specifically, we analyze the following three cases:
\begin{itemize}
  \item $q=p$;
  \item $q=\rho^{2}$, a perfect square;
  \item $q=\rho^{2}\rho^*$, a product of odd powers of primes.
\end{itemize}

For a prime $p$, we denote by
\begin{align}\label{eqDp}
\mathbb{D}(p)=\{(b_1,b_2,b_3,b_4): \forall i,\ b_i\equiv b_j\ppmod p\ \text{for some} \ j\neq i\}
\end{align}
as the subset of $\vec{b}$, in which no component $b_i$ is distinct from the others modulo $p$.
\begin{proposition} \label{prop:1}
	For $(b_1,b_2,b_3,b_4)\notin \mathbb{D}(p)$, we have
	\begin{equation*}
		{G}_{p}(b_1,b_2,b_3,b_4)\ll p^{\frac52}.
	\end{equation*}
	For $(b_1,b_2,b_3,b_4)\in \mathbb{D}(p)$, we have
	\begin{equation*}
		{G}_{p}(b_1,b_2,b_3,b_4)\ll p^3.
	\end{equation*}
	\begin{proof}
		The first bound is obtained from \cite[Proposition 3.2]{Fouvry2014}, while the second bound follows directly from the Weil bound.
	\end{proof}
\end{proposition}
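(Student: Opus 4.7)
My plan is to treat the two bounds separately, matching the two regimes in the statement. The generic bound for tuples outside $\mathbb{D}(p)$ is essentially an input from the theory of trace functions, while the bound for tuples in $\mathbb{D}(p)$ is a straightforward consequence of the pointwise Weil bound.

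For tuples $(b_1,b_2,b_3,b_4)\notin\mathbb{D}(p)$, by the defining property of $\mathbb{D}(p)$ there exists some index $i$ such that $b_i\not\equiv b_j\pmod{p}$ for every $j\neq i$. After a relabeling of indices we may assume that this ``isolated'' component is $b_4$. The sum $G_p(b_1,b_2,b_3,b_4)$ is then precisely the sum of products of four Kloosterman sums in the variable $h\pmod p$ whose frequencies $\bar b_1,\bar b_2,\bar b_3,\bar b_4$ have the property that one of them is distinct from the others. This is the setting in which the $\ell$-adic analysis of Fouvry--Kowalski--Michel applies; I would quote \cite[Proposition 3.2]{Fouvry2014}, which provides square-root cancellation in the $h$-sum, yielding exactly the bound $p\cdot p^{3/2}=p^{5/2}$.

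For tuples $(b_1,b_2,b_3,b_4)\in\mathbb{D}(p)$, I would abandon any cancellation in $h$ and argue pointwise. By the Weil bound, for every $h\not\equiv 0\pmod p$ and every $b_i$ coprime to $p$ one has $|S(h,\bar b_i;p)|\le 2\sqrt{p}$, so the contribution of such $h$ to $G_p$ is bounded by $(p-1)\cdot(2\sqrt{p})^4\ll p^3$. The single term $h\equiv 0\pmod p$ reduces each Kloosterman sum to a Ramanujan sum of size $O(1)$ and is therefore negligible. Summing the two contributions gives $G_p(b_1,b_2,b_3,b_4)\ll p^3$.

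There is no real obstacle here: the first bound is invoked as a black box from the trace-function literature, and the second is a completely trivial Weil bound. The only minor point of care is to check that when $h=0$ the Kloosterman sums do not contribute more than $O(1)$, which is immediate from the standard evaluation of the Ramanujan sum.
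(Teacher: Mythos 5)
Your proof is correct and follows exactly the same route as the paper: the non-degenerate case is a direct citation of \cite[Proposition 3.2]{Fouvry2014}, and the degenerate case is the pointwise Weil bound $|S(h,\bar b_i;p)|\le 2\sqrt p$ applied to each of the four factors and summed trivially over $h$. Your added remark about the $h\equiv 0$ term reducing to a Ramanujan sum of size $O(1)$ is a correct detail the paper leaves implicit.
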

\begin{proposition} \label{prop:2}
	For $q=\rho^{2}$, we have
	\begin{equation}
		{G}_{q}(b_1,b_2,b_3,b_4)\ll q^{\frac52}{\mathcal{N}}_{1}(\rho),
	\end{equation}
	where
	\begin{equation}\label{eqN1}
		{\mathcal{N}}_{1}(\rho)=\#\left\{s,s_i(\bmod\ {\rho}),(ss_i,\rho)=1;\ \sum_{i=1}^{4}s_i\equiv0\ppmod{\rho},s{s_i}^2\equiv\bar{b}_i\ppmod{\rho}\right\}.
	\end{equation}
\end{proposition}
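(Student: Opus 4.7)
The plan is to evaluate each Kloosterman sum $S(h,\bar b_i;\rho^2)$ via a $p$-adic ``stationary phase'' expansion and then carry out the outer sum over $h\pmod{\rho^2}$. Parametrising $x\pmod{\rho^2}$ with $(x,\rho)=1$ as $x=y+\rho z$, where $y$ runs over a lift of $(\mathbb Z/\rho\mathbb Z)^*$ and $z$ runs mod $\rho$, the identity $\bar x\equiv \bar y-\rho z\bar y^{2}\ppmod{\rho^2}$ gives
\[
hx+\bar b_i\bar x\equiv (hy+\bar b_i\bar y)+\rho z(h-\bar b_i\bar y^{2})\ppmod{\rho^2}.
\]
Summation over $z\pmod\rho$ contributes a factor $\rho$ together with the stationary condition $hy^{2}\equiv \bar b_i\ppmod\rho$, whence
\[
S(h,\bar b_i;\rho^2)=\rho\sum_{\substack{y\pmod\rho,\,(y,\rho)=1\\ hy^{2}\equiv \bar b_i\ppmod\rho}}e_{\rho^2}\bigl(hy+\bar b_i\bar y\bigr).
\]

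I would then substitute this expression into $G_{\rho^2}(\vec b)=\sum_h\prod_{i=1}^4 S(h,\bar b_i;\rho^2)$ and decompose $h=h_0+\rho h_1$ with $h_0,h_1$ ranging mod $\rho$. The $h_1$-dependence appears only through a linear phase $e_\rho\bigl(h_1\sum_i y_i\bigr)$, so summation over $h_1\pmod\rho$ produces $\rho\cdot\mathbf 1_{\sum_i y_i\equiv 0\ppmod\rho}$, while the conditions $h_0 y_i^{2}\equiv \bar b_i\ppmod\rho$ persist for each $i$. Relabelling $s=h_0$ and $s_i=y_i$, the surviving identity is
\[
G_{\rho^2}(\vec b)=\rho^{5}\sum_{\substack{s,s_1,\dots,s_4\pmod\rho\\ ss_i^{2}\equiv \bar b_i\ppmod\rho\\ \sum_i s_i\equiv 0\ppmod\rho}}e_{\rho^2}\Bigl(\sum_{i=1}^4\bigl(ss_i+\bar b_i\bar s_i\bigr)\Bigr),
\]
and the triangle inequality immediately yields $|G_{\rho^2}(\vec b)|\le \rho^{5}{\mathcal N}_1(\rho)=q^{5/2}{\mathcal N}_1(\rho)$, which is the desired bound.

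The only delicate point I anticipate is the well-definedness of the phase $hy+\bar b_i\bar y$, viewed as an element of $\mathbb Z/\rho^2\mathbb Z$: a priori it depends on the chosen lift of $y$ from $\pmod\rho$ to $\pmod{\rho^2}$. Replacing $y$ by $y+\rho$ changes $hy+\bar b_i\bar y$ by $\rho(h-\bar b_i\bar y^{2})\ppmod{\rho^2}$, which vanishes precisely because of the stationary constraint $h\equiv \bar b_i\bar y^{2}\ppmod\rho$; so well-definedness and the emergence of the constraint come from the same mechanism. Beyond this verification the argument is essentially bookkeeping: no oscillation in the remaining phase is exploited, which is exactly why the final bound is stated in terms of the naive counting function ${\mathcal N}_1(\rho)$ rather than a cancellation estimate.
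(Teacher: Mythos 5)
Your proof is correct and follows essentially the same strategy as the paper: a stationary-phase evaluation of $S(h,\bar b_i;\rho^2)$ that forces $hy_i^2\equiv\bar b_i\pmod\rho$, followed by a sum over the additive complement of $h$ that forces $\sum_i y_i\equiv 0\pmod\rho$, and finally a trivial bound. The only cosmetic difference is that the paper first reduces to prime-power moduli $\rho=p^k$ (invoking multiplicativity of both sides) and uses the multiplicative parametrisation $h=(1+rp^k)s$, $x_i=(1+r_ip^k)s_i$, whereas you work directly with a general $\rho$ via the additive lift $x=y+\rho z$, $h=h_0+\rho h_1$; your well-definedness check of the phase $hy+\bar b_i\bar y$ on residues mod $\rho$ plays exactly the role that the paper's prime-power reduction and multiplicative change of variables plays. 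Both routes yield the same surviving constraint set and the same prefactor $\rho^5=q^{5/2}$.
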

\begin{proof}
Both sides of \eqref{prop:2} exhibit multiplicativity with respect to $\rho$. It suffices to consider the case $\rho=p^k$ with $k\ge1$.
Expressing the Kloosterman sum associated with $b_i$ as
\begin{equation}
S(h,\bar{b}_i;p^{2k})=\sums_{x_i\ppmod{p^{2k}}}e_{p^{2k}}\left(hx_i+\bar{b}_i\bar{x}_i\right). \nonumber
\end{equation}	
We note that $S(h,\bar{b}_i;p^{2k})=0$ if $p\mid h$ (See \cite[Lemma A.4]{KSWX23}).
For $(h,p)=1$, we set
\begin{equation}
h=(1+rp^k)s,\qquad x_i=(1+r_ip^k)s_i\nonumber
\end{equation}
with $r, r_i, s, s_i$ are defined modulo $p^k$ and satisfy $(ss_i,p)=1$,
then
\begin{equation}
\bar{x}_i=(1-r_ip^k)\bar{s}_i,  \nonumber
\end{equation}
where $\bar{x}_i, \bar{s}_i$ are inverses of $x_i$ and $s_i$ modulo $q$. This yields
\begin{equation}
S(h,\bar{b}_i;p^{2k})=\sums_{s_i\ppmod{p^k}} e_{p^{2k}}\left(ss_i+\bar{b}_i\bar{s}_i\right)\sum_{r_i\ppmod{p^k}}e_{p^{k}}\left(rss_i+r_is{s}_i-r_i\bar{b}_i\bar{s}_i\right).\nonumber
\end{equation}
Substituting this into the expression of $G_{p^{2k}}$, we obtain
\begin{align*}
{G}_{p^{2k}}(b_1,b_2,b_3,b_4)&=\sums_{s\ppmod{p^k}}{\sums_{\substack{s_i\ppmod{p^k}\\ (i=1,2,3,4)}}}e_{p^{2k}}\Bigg(\sum_{i=1}^{4}ss_i+\sum_{i=1}^{4}\bar{b}_i\bar{s}_i\Bigg)\\
&\ \ \ \ \ \ \times\sum_{\substack{r,r_i\ppmod{p^k}\\(i=1,2,3,4)}}e_{p^{k}}\Bigg(rs\sum_{i=1}^{4}s_i+\sum_{i=1}^{4}r_i\left(ss_i-\bar{b}_i\bar{s}_i\right)\Bigg).
\end{align*}
The sums over $r$ and $r_i$ vanish unless $s, s_i$ satisfy the conditions in \eqref{eqN1}; in such case, each sum equals $p^k$. A trivial estimate then yields ${G}_{p^{2k}}\ll p^{5k}\mathcal{N}_1(p^k)$, and the lemma follows from the multiplicative property.
\end{proof}
\begin{proposition}\label{prop:3}
	For $q=\rho^{2}\rho^*$, we have
	\begin{equation}
		{G}_{q}(b_1,b_2,b_3,b_4)\ll q^{\frac{5}{2}}\mathring{q}^{\half}{\mathcal{N}}_{2}(\rho),
	\end{equation}
	where
	\begin{eqnarray}
		\begin{aligned}
			{\mathcal{N}}_{2}(\rho)=\#\Bigg\{u,u_i\left(\bmod\ \rho\right),(uu_i,\rho)=1;\sum_{i=1}^{4}(uu_{i}+\bar{b}_i\bar{u}_i)\equiv0\left(\bmod\ \rho\rho^*\right), uu_i\equiv\bar{b}_i\bar{u}_i\left(\bmod\ \rho\right)\Bigg\}.\nonumber
		\end{aligned}
	\end{eqnarray}
\end{proposition}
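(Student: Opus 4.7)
My plan is to mirror the proof of Proposition~\ref{prop:2}, adapting it to the odd prime power case. By the multiplicative property of $G_q$ and of $\mathcal{N}_2(\rho)$, together with the Chinese Remainder Theorem, I would first reduce to the single prime power case $q = p^{2k+1}$ with $k \ge 1$, so that $\rho = p^k$, $\rho^* = p$, $\mathring{q} = p$, and the target becomes $|G_{p^{2k+1}}(\vec{b})| \ll p^{5k+3}\,\mathcal{N}_2(p^k)$.

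I would parametrize $h = (1+rp^{k+1})s$ and $x_i = (1+r_i p^{k+1})s_i$, with $s, s_i$ ranging over units modulo $p^{k+1}$ (lifted canonically to $\mathbb{Z}/p^{2k+1}\mathbb{Z}$) and $r, r_i$ ranging over $\mathbb{Z}/p^k\mathbb{Z}$. Since $(1+rp^{k+1})^{-1} \equiv 1 - rp^{k+1} \pmod{p^{2k+1}}$, one has
\[
h x_i + \bar{b}_i \bar{x}_i \equiv s s_i + \bar{b}_i \bar{s}_i + p^{k+1}\bigl[rss_i + r_i(ss_i - \bar{b}_i \bar{s}_i)\bigr] \pmod{p^{2k+1}}.
\]
Summing over $r$ and each $r_i$ in $\mathbb{Z}/p^k\mathbb{Z}$ yields a factor $p^{5k}$ together with the delta conditions $\sum_i s_i \equiv 0 \pmod{p^k}$ and $s s_i \equiv \bar{b}_i \bar{s}_i \pmod{p^k}$, so the task reduces to bounding $p^{5k}\,\sums_{s, s_i} e_{p^{2k+1}}\bigl(\sum_i (s s_i + \bar{b}_i \bar{s}_i)\bigr)$ over units $s, s_i$ modulo $p^{k+1}$ satisfying these constraints.

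To go further I would decompose $s = u + \sigma p^k$ and $s_i = u_i + \sigma_i p^k$ with $u, u_i \in \{1,\ldots,p^k\}\setminus p\mathbb{Z}$ and $\sigma, \sigma_i \in \mathbb{F}_p$. A Taylor-type expansion of the inverse gives $\bar{s}_i \equiv \hat{u}_i - \sigma_i \hat{u}_i^2 p^k + \sigma_i^2 \hat{u}_i^3 p^{2k} \pmod{p^{2k+1}}$ (where $\hat{u}_i$ is the inverse of $u_i$ modulo $p^{2k+1}$), and the delta conditions already obtained imply $u - \bar{b}_i \hat{u}_i^2 \equiv 0 \pmod{p^k}$ and $\sum_i u_i \equiv 0 \pmod{p^k}$. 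These constraints cause all ``linear in $\sigma$'' contributions to collapse into the $p^{2k}$-level, so the residual phase factorizes as $e_{p^{2k+1}}\bigl(\sum_i (u u_i + \bar{b}_i \hat{u}_i)\bigr)\cdot e_p(\Psi)$, where $\Psi$ is a quadratic polynomial in $(\sigma, \sigma_i)$ whose quadratic part is $\sigma \sum_i \sigma_i + \sum_i \bar{b}_i \hat{u}_i^3 \sigma_i^2$. Summing over $\sigma$ yields a delta pinning $\sum_i \sigma_i \equiv -\alpha \pmod p$ (with $\alpha = p^{-k}\sum_i u_i$), and this delta, combined with the linear $\sigma_i$-terms of $\Psi$, is exactly what promotes the mod-$p^k$ conditions to the mod-$p^{k+1}$ condition defining $\mathcal{N}_2(p^k)$. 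The remaining sum over the three free $\sigma_i$'s is a nondegenerate diagonal quadratic Gauss sum of size $p^{3/2}$ (when $\sum_i b_i u_i^3 \not\equiv 0 \pmod p$), giving an inner total $p \cdot p^{3/2} = p^{5/2}$. Combined with an additional $\sqrt{p}$ of cancellation from the outer $(u,u_i)$-sum, the final bound is $p^{5k}\cdot p^{5/2}\cdot\sqrt{p}\cdot\mathcal{N}_2(p^k) = q^{5/2}\sqrt{\mathring{q}}\,\mathcal{N}_2(\rho)$.

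The principal obstacle will be establishing precisely that the mod-$p$ delta produced by the $\sigma$-sum and the Gauss-sum structure encodes the specific mod-$p^{k+1}$ condition in $\mathcal{N}_2(\rho)$: this requires careful bookkeeping of the lifts $\hat{u}_i \pmod{p^{2k+1}}$ versus $\bar{u}_i \pmod{p^k}$ and verifying an algebraic identity linking the defects $c_i = p^{-k}(u - \bar{b}_i \hat{u}_i^2)$ and $\eta_i = p^{-k}(uu_i - \bar{b}_i \bar{u}_i)$. A secondary difficulty, typical of Salié-type arguments, is the degenerate locus $\sum_i b_i u_i^3 \equiv 0 \pmod p$, which must be handled by a separate argument since the quadratic Gauss sum there collapses.
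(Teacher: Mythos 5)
Your parametrization $h=(1+rp^{k+1})s$, $x_i=(1+r_ip^{k+1})s_i$ with the further split $s=u+\sigma p^k$, $s_i=u_i+\sigma_i p^k$ is, up to a change of labels, the same decomposition as the paper's $h=(1+(w+tp)p^k)u$, $x_i=(1+(w_i+t_ip)p^k)u_i$; the $r$-sums reproduce the paper's $T$-factor and the $(\sigma,\sigma_i)$-sums play the role of the paper's $W$-factor. The computation of the phase through level $p^{2k}$, including the collapse of the $p^k$-level linear terms into the $p^{2k}$-level using the delta conditions from the $r$-sums, is correct.

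The gap is in your treatment of the remaining $\sigma_i$-sum. You claim that, after the $\sigma$-sum pins $\sum_i\sigma_i\equiv-\alpha\pmod p$, the constrained sum over $\sigma_i$ is a \emph{nondegenerate} $3$-dimensional diagonal Gauss sum of size $p^{3/2}$ (generically), with the degenerate locus $\sum_i b_iu_i^3\equiv0\pmod p$ a secondary case to be treated separately. This is backwards: the ``degenerate locus'' is in fact everything. Indeed, writing $a_i\equiv\bar b_i\bar u_i^3\pmod p$, the $r$-sum condition $uu_i\equiv\bar b_i\bar u_i\pmod{p^k}$ gives $a_i\equiv u\bar u_i\pmod p$, so $\sum_i\bar a_i\equiv\bar u\sum_i u_i\equiv0\pmod p$ by the other $r$-sum condition. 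The determinant of the quadratic form $\sum_ia_i\sigma_i^2$ restricted to the hyperplane $\sum_i\sigma_i=-\alpha$ is proportional to $\bigl(\prod_ia_i\bigr)\sum_i\bar a_i$, which therefore vanishes identically. Consequently the constrained $\sigma_i$-sum is never a generic $p^{3/2}$ Gauss sum: it is a rank-$\le2$ degenerate sum, of size $p^2$ times an indicator of a new linear condition on $(u,u_i)$ modulo $p$. One can see this cleanly by reintroducing the constraint via a Lagrange variable $\tau$: each $\sigma_i$-sum becomes $g(a_i)e_p(-\bar4\bar a_i(\tau+c_i)^2)$, and since $\sum_i\bar a_i\equiv0$ the $\tau^2$-terms cancel, so the $\tau$-sum is linear and yields the extra delta. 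Thus the inner $(\sigma,\sigma_i)$-block contributes $p\cdot p^2\cdot\mathbf{1}=p^3\mathbf{1}$, not $p\cdot p^{3/2}$, and it is this extra delta --- not any cancellation in the outer $(u,u_i)$-sum (which must be bounded trivially in absolute value to get an upper bound) --- that promotes the $\bmod\ p^k$ conditions to the $\bmod\ p^{k+1}$ condition in $\mathcal{N}_2$. Your proposed budget of ``$p^{5/2}$ inner $\times$ extra $\sqrt p$ from the outer sum'' therefore does not reflect the actual structure. The paper avoids this pitfall by never forming a multi-variable Gauss sum: it inserts the redundant factor $e_p(\bar4uu_iw^2)$ (legitimate precisely because $\sum_iuu_i\equiv0\pmod p$) to complete $\sum_iuu_i(w_i+\bar2w)^2$, shifts $w_i\mapsto w_i-\bar2w$ to decouple each $w_i$ into a genuine one-variable Gauss sum of size $p^{1/2}$, and is left with a purely linear sum over $w$ whose vanishing is exactly the extra $\bmod\ p^{k+1}$ condition. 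You should rework the inner block along these lines; as it stands your argument would not produce the factor $\mathcal{N}_2(\rho)$ (only a weaker $\bmod\ p^k$ count), and the claimed size of the Gauss sum is incorrect.
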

	\begin{proof}
Observe the set in ${\mathcal{N}}_{2}$, if $u_i$ belongs to the set, so does $u_i(1+c\rho)$. This closure property extends to $u$ as well. Thus, elements in the set are uniquely determined by their residues modulo $\rho$, even in the presence of a congruence condition modulo $\rho\rho^*$.
By the multiplicative property, it suffices to consider the case $\rho=p^k$ for $k\ge1$.
The Kloosterman sum 		
\begin{equation}
	S(h,\bar{b}_i;p^{2k+1})=\sums_{x_i\ppmod{p^{2k+1}}}e_{p^{2k+1}}\left(hx_i+\bar{b}_i\bar{x}_i\right),\nonumber
\end{equation}
associated with ${G}_{q}$, vanishes whenever $p\mid h$. For $(h,p)=1$,
we write
\begin{equation}
h=\left({1+(w+tp)p^k}\right)u, \ \ \ \ \ \ x_i=\left({1+(w_i+t_ip)p^k}\right)u_i\nonumber
\end{equation}
where $t, t_i, u, u_i$ are defined modulo $p^k$ with $(uu_i,p)=1$, and $w, w_i$ are defined modulo $p$. Consequently, we have
\begin{equation}
\bar{x}_i=\left({1-(w_i+t_ip)p^k+{w_i}^2p^{2k}}\right)\bar{u}_i,\nonumber
\end{equation}
where $\bar{x}_i, \bar{u}_i$ denote the inverses of $x_i$ and $u_i$ modulo $q$ respectively.
Applying these quantities to the Kloosterman sums and inserting them into the expression of $G_q$, we simplify to obtain
\begin{align}
{G}_{q}(b_1,b_2,b_3,b_4)=\sums_{u\ppmod{p^k}}{\sums_{\substack{u_i\ppmod{p^k}\\\left(i=1,2,3,4\right)}}}e_{p^{2k+1}}\Bigg(u\sum_{i=1}^{4}u_i+\sum_{i=1}^{4}\bar{b}_i\bar{u}_i\Bigg)W(u,b_i,u_i)T(u,b_i,u_i)\nonumber
\end{align}
where
\begin{align}
W(u,b_i,u_i)=&\sum_{\substack{w,w_i\ppmod p\\\left(i=1,2,3,4\right)}}e_{p^{k+1}}\Bigg(uw\sum_{i=1}^{4}u_i+\sum_{i=1}^{4}w_i\left(uu_i-\bar{b}_i\bar{u}_i\right)\Bigg) \nonumber \\
&\qquad e_p\Bigg(\sum_{i=1}^{4}{w}_i^2\bar{b}_i\bar{u}_i+uw\sum_{i=1}^{4}u_iw_i\Bigg) \nonumber
\end{align}
and
\begin{equation}
T(u,b_i,u_i)=\sum_{\substack{t,t_i\ppmod{p^k}\\\left(i=1,2,3,4\right)}}e_{p^{k}}\Bigg(tu\sum_{i=1}^{4}u_i+\sum_{i=1}^{4}t_i\left(uu_i-\bar{b}_i\bar{u}_i\right)\Bigg).\nonumber
\end{equation}

By the orthogonality of additive characters, we deduce
\begin{equation*}
T(u,b_i,u_i)=p^{5k}\mathbf{1}_{\mathcal{T}},
\end{equation*}
where
\begin{equation}
\mathcal{T}=\left\{u,u_i(\bmod\ {p^k}),(uu_i,p)=1; \sum_{i=1}^{4}u_i\equiv0\left(\bmod \ {p^k}\right), uu_i\equiv\bar{b}_i\bar{u}_i\left(\bmod\ {p^k}\right)\right\}.\nonumber
\end{equation}	

The exponential terms in $W(u,b_i,u_i)$ require further analysis. To address this, we employ the congruence relation of
$\mathcal{T}$ to separate the variables
$w$ and $w_i$ in the exponents. Given
$\sum_{i=1}^{4}u_i\equiv0\left(\bmod\ {p^k}\right)$ and $uu_i\equiv\bar{b}_i\bar{u}_i\left(\bmod\ {p^k}\right)$, we reformulate $W(u,b_i,u_i)$ as
\begin{align}
W(u,b_i,u_i)=\sum_{w\ppmod p}e_{p}\Bigg(w\sum_{i=1}^{4}\frac{uu_i}{p^k}\Bigg)\sum_{\substack{w_i\ppmod p\\(i=1,2,3,4)}}e_p\Bigg(\sum_{i=1}^{4}\left(\frac{uu_i-\bar{b}_i\bar{u}_i}{p^k}w_i+w_i^{2}uu_i+uwu_iw_i\right)\Bigg).\nonumber
\end{align}
Let $\bar{4}$ (or $\bar{2}$) denote the inverse of $4$ (or $2$) modulo $p$. Introducing a redundant factor
\[
1=e_p\Bigg(\sum_{i=1}^4\bar{4}uu_iw^2\Bigg)
\]
we rewrite the sum over $w_i$ as
\begin{align}
\sum_{\substack{w_i\ppmod p\\(i=1,2,3,4)}}&e_p\Bigg(\sum_{i=1}^{4}\left(\frac{uu_i-\bar{b}_i\bar{u}_i}{p^k}w_i+uu_i\left(w_i+\bar{2}w\right)^{2}\right)\Bigg)\nonumber\\
&=\prod_{i=1}^{4}\sum_{w_i(\bmod p)}e_p\left(uu_i{w_i}^2+\frac{uu_i-\bar{b}_i\bar{u}_i}{p^k}(w_i-\bar{2}w)\right),\nonumber
\end{align}
where $w_i$ and $w$ are fully separated, and each $w_i$-sum reduces to a Gauss sum. Applying this in $W(u,b_i,u_i)$ and setting $\bar{2}\equiv(1+p)/2 \(\bmod p\)$, we obtain
\begin{align}
W(u,b_i,u_i)&=\sum_{w\ppmod p}e_{p}\Bigg(w\sum_{i=1}^{4}\frac{uu_i-\bar{2}\left(uu_i-\bar{b}_i\bar{u}_i\right)}{p^k}\Bigg)\prod_{i=1}^{4}\sum_{w_i\ppmod p}e_p\Bigg(uu_iw_i^{2}+\frac{uu_i-\bar{b}_i\bar{u}_i}{p^k}w_i\Bigg)\nonumber\\
&\ll p^2\left\vert\sum_{w\ppmod p}e_{p}\Bigg(\frac{w}{p^k}\Bigg(\frac{1-p}{2}\sum_{i=1}^{4}uu_i+\frac{1+p}{2}\sum_{i=1}^{4}\bar{b}_i\bar{u}_i\Bigg)\Bigg)\right\vert  \nonumber \\
&\ll{p}^3\mathbf{1}_{\mathcal{W}},\nonumber
\end{align}
where
\begin{equation}
\mathcal{W}=\left\{u,u_i\ppmod{p^k},(uu_i,p)=1; \frac{1-p}{2}\sum_{i=1}^{4}uu_i+\frac{1+p}{2}\sum_{i=1}^{4}\bar{b}_i\bar{u}_i\equiv0\ppmod {p^{k+1}}\right\}.\nonumber
\end{equation}
Thus, we conclude that
\begin{align}\label{eq:k_2}
{G}_{p^{2k+1}}(b_1,b_2,b_3,b_4)&\leq p^{5k+3}\#\Bigg\{u,u_i\left(\bmod\ p^{k}\right),(uu_i,p)=1;\ \sum_{i=1}^{4}u_{i}\equiv0\left(\bmod\ p^{k}\right), \nonumber\\
&\ \ \ \ \ \ \ \ \ \ \ \ \ \ uu_{i}\equiv \bar{b}_i\bar{u}_i\left(\bmod\ p^{k}\right), \sum_{i=1}^{4}\left(uu_{i}+\bar{b}_i\bar{u}_i\right)\equiv0\left(\bmod\ p^{k+1}\right)\Bigg\},\nonumber
\end{align}
and then the proposition follows from the multiplicative property.
\end{proof}

\section{Some analogues of Kloosterman sums}\label{secAK}
In this section, we establish upper bounds for several formal analogues of Kloosterman sums, all of which demonstrate square-root cancellation up to a harmless scaling factor.
\begin{lemma} \label{Est of ExpSums}
Let $q\ge1$ and
\begin{equation}
\mathcal{S}(\gamma,\mu,q)=\sums_{s\ppmod {q}}e_{q}(\gamma{\bar{s}^2}+\mu s). \nonumber
\end{equation}
Then
\begin{align}
\mathcal{S}(\gamma,\mu,q)\ll{(\gamma,\mu,q)}^{\frac12}q^{\frac{1}{2}+\ve}.
\end{align}
\end{lemma}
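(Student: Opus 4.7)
The plan is to exploit multiplicativity in $q$ and then treat prime-power moduli by $p$-adic stationary phase. For coprime $q=q_1q_2$, parametrizing $s$ via CRT and using $e_q(x)=e_{q_1}(x\bar{q}_2)\,e_{q_2}(x\bar{q}_1)$, one obtains
\begin{align*}
\mathcal{S}(\gamma,\mu,q_1q_2)=\mathcal{S}(\gamma\bar{q}_2,\mu\bar{q}_2,q_1)\,\mathcal{S}(\gamma\bar{q}_1,\mu\bar{q}_1,q_2).
\end{align*}
Since each $\bar{q}_i$ is a unit modulo $q_{3-i}$, the gcd $(\gamma,\mu,\cdot)$ factors multiplicatively across prime powers, and it suffices to establish the bound at $q=p^k$.

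At $q=p^k$, set $p^\ell=(\gamma,\mu,p^k)$ and write $\gamma=p^\ell\gamma_1$, $\mu=p^\ell\mu_1$ with $(\gamma_1,\mu_1,p)=1$. Since $\gamma\bar{s}^2+\mu s=p^\ell(\gamma_1\bar{s}^2+\mu_1 s)$ and the reductions of $s$ and $\bar{s}$ modulo $p^{k-\ell}$ depend only on $s\!\pmod{p^{k-\ell}}$, collapsing the outer residues gives
\begin{align*}
\mathcal{S}(\gamma,\mu,p^k)=p^\ell\,\mathcal{S}(\gamma_1,\mu_1,p^{k-\ell}),
\end{align*}
reducing matters to proving $\mathcal{S}(\gamma_1,\mu_1,p^j)\ll p^{j/2+\ve}$ whenever $(\gamma_1,\mu_1,p)=1$; in the extreme case $\ell=k$ the exponent vanishes and $\mathcal{S}\le\varphi(p^k)\le p^k$, which matches the target.

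For $j=1$: if $p\mid\gamma_1$ the sum is a Ramanujan sum of size $O(1)$; if $p\mid\mu_1$ the substitution $s\mapsto\bar{s}$ reduces it to an incomplete quadratic Gauss sum, of size $O(p^{1/2})$; and if $p\nmid\gamma_1\mu_1$, the Weil bound for the rational function $f(s)=(\gamma_1+\mu_1 s^3)/s^2$ over $\mathbb{F}_p$ gives $O(p^{1/2})$. For $j\ge 2$, parametrize $s=s_0(1+p^{\lceil j/2\rceil}u)$, with $s_0$ ranging over units in $\{1,\ldots,p^{\lceil j/2\rceil}\}$ and $u$ modulo $p^{\lfloor j/2\rfloor}$, and Taylor expand $\bar{s}$, $\bar{s}^2$ to obtain
\begin{align*}
\gamma_1\bar{s}^2+\mu_1 s\equiv \gamma_1\bar{s}_0^{\,2}+\mu_1 s_0+p^{\lceil j/2\rceil}u\,(\mu_1 s_0-2\gamma_1\bar{s}_0^{\,2})+p^{2\lceil j/2\rceil}Q(u,s_0)\pmod{p^j}.
\end{align*}
When $j$ is even the remainder $Q$ vanishes modulo $p^j$, so the $u$-sum localizes to those $s_0$ with $\mu_1 s_0^3\equiv 2\gamma_1\pmod{p^{j/2}}$, of which there are $O(1)$ by Hensel, yielding $\mathcal{S}\ll p^{j/2}$. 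When $j$ is odd the quadratic remainder $Q$ survives modulo $p$; after enforcing the linear stationary-phase constraint modulo $p^{\lfloor j/2\rfloor}$, the sum over the $p$ lifts of $s_0$ to the finer modulus becomes a nondegenerate one-variable Gauss sum modulo $p$ of size $p^{1/2}$, again producing $\mathcal{S}\ll p^{j/2}$.

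The main technical obstacle is this second-order stationary-phase analysis for odd $j\ge 3$, where the quadratic Gauss cancellation is essential: a naive first-order analysis loses a factor of $p^{1/2}$. Degenerate sub-cases in which exactly one of $\gamma_1,\mu_1$ is divisible by $p$ force $\mu_1 s_0^3\not\equiv 2\gamma_1\pmod{p}$ and hence $\mathcal{S}=0$, while the small prime $p=2$ needs minor separate bookkeeping due to the factor of $2$ in $2\gamma_1\bar{s}_0^{\,3}$; all such losses are absorbed by the factor $p^\ve$.
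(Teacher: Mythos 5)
The paper's own ``proof'' of this lemma is a one-line citation to Friedlander--Iwaniec, Proposition~6, so your self-contained derivation is a genuinely different route, and its skeleton (Chinese Remainder Theorem, then factoring out the common $p$-power $(\gamma,\mu,p^k)$, then Weil for $j=1$ and $p$-adic stationary phase for $j\geq2$) is the standard and correct way to prove bounds of this shape. The even-$j$ analysis and the degenerate sub-cases are handled correctly.

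There is, however, a bookkeeping slip in the odd-$j$ case. With $s=s_0(1+p^{\lceil j/2\rceil}u)$, the remainder term in your displayed expansion carries the factor $p^{2\lceil j/2\rceil}$, which equals $p^{j+1}$ when $j$ is odd and hence is $\equiv 0 \pmod{p^j}$, just as in the even case; so the assertion ``the quadratic remainder $Q$ survives modulo $p$'' is false for the $u$-expansion as written, and the phase is \emph{linear} in $u$ for both parities. The genuine source of the Gauss-sum cancellation for odd $j$ is the one you describe in the second half of that sentence: the $u$-sum pins $s_0$ only modulo $p^{\lfloor j/2\rfloor}$, leaving $p$ lifts $s_0=s_0'+vp^{\lfloor j/2\rfloor}$, and Taylor-expanding $\gamma_1\bar s_0^{\,2}+\mu_1 s_0$ in $v$ (using that the linear coefficient $\mu_1-2\gamma_1\bar s_0'^{\,3}$ is already divisible by $p^{\lfloor j/2\rfloor}$ on the support of the constraint) produces a nondegenerate quadratic phase $wv+3\gamma_1\bar s_0'^{\,4}v^2$ modulo $p$, whence the extra $p^{1/2}$. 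You should rewrite the displayed formula and the surrounding sentence to make this two-step structure explicit rather than attributing the quadratic term to the $u$-expansion. Also note that the quadratic coefficient $3\gamma_1\bar s_0'^{\,4}$ vanishes mod $p$ when $p=3$, so the Gauss sum degenerates there; the resulting loss is bounded (the prime $3$ occurs only once in $q$) and is absorbed by the absolute implied constant rather than by the $q^{\varepsilon}$, which is the same remark you make for $p=2$. With these corrections the argument is complete and yields the stated bound.
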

\begin{proof}
This is a special case of  \cite[Proposition 6]{IwaniecKsum}.
\end{proof}
The following lemma establishes an upper bound for an extension of the Gauss sum exhibiting square-root cancellation; this bound will be utilized in the proof of Lemma $\ref{U}$.
\begin{lemma} \label{ExpSums}
For integers $a_1, a_2$, and $q\ge1$, denote by
\begin{equation}
\mathfrak{S}(a_1,a_2,q)=\sum_{s\ppmod{q}}e_{q}(a_1s+a_2s^2P(sq^*)),\nonumber
\end{equation}
where $P(x)$ is any polynomial with integer coefficients, satisfying $P(0)=1$.
Then we have
\[
\mathfrak{S}(a_1,a_2,q)\ll(a_1,a_2,q)^\half{q}^{\half+\ve}.
\]
\end{lemma}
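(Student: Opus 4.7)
The plan is to reduce to prime power moduli via the Chinese remainder theorem and then run a $p$-adic stationary phase argument on each factor. By CRT, $\mathfrak S(a_1,a_2,q)$ factors as a product over the prime powers $p^k\Vert q$; on the factor at $p^k$ one has $sq^{*}\equiv(q^{*}/p)\cdot sp\pmod{p^k}$ with $q^{*}/p$ a unit modulo $p^k$, so after absorbing that unit into a change of variable the sum has the same shape with $P$ replaced by $P_p(x):=P((q^{*}/p)x)\in\mathbb Z[x]$, still satisfying $P_p(0)=1$, and with the coefficients $a_1,a_2$ multiplied by units (which preserves their common divisor with $p^k$). It therefore suffices to prove
\[
\mathfrak S(a_1,a_2,p^k)\ll(a_1,a_2,p^k)^{1/2}\,p^{k(1/2+\varepsilon)}
\]
uniformly in polynomials $P\in\mathbb Z[x]$ with $P(0)=1$.

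Writing $d=p^j=(a_1,a_2,p^k)$ and $a_i=p^j a_i'$, the exponent depends only on $s$ modulo $p^{K}$ with $K:=k-j$, so $\mathfrak S(a_1,a_2,p^k)=p^j\,\mathfrak S'$ where
\[
\mathfrak S':=\sum_{s\pmod{p^{K}}}e_{p^{K}}(f(s)),\qquad f(s):=a_1's+a_2's^{2}P(sp),\qquad(a_1',a_2',p)=1.
\]
The case $K=1$ reduces to a classical quadratic Gauss sum (since $P(sp)\equiv 1\pmod p$) and is bounded by $p^{1/2}$. For $K\ge 2$ I apply $p$-adic stationary phase: set $\ell=\lceil K/2\rceil$ and write $s=u+p^{\ell}v$ with $u$ modulo $p^{\ell}$ and $v$ modulo $p^{K-\ell}$. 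Since $2\ell\ge K$, Taylor expansion yields $f(u+p^{\ell}v)\equiv f(u)+p^{\ell}v\,f'(u)\pmod{p^{K}}$, and the $v$-sum collapses to $p^{K-\ell}\mathbf{1}_{p^{K-\ell}\mid f'(u)}$. Hence $|\mathfrak S'|\le p^{K-\ell}N$, where $N$ counts residues $u\pmod{p^{\ell}}$ satisfying $f'(u)\equiv 0\pmod{p^{K-\ell}}$.

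The crux is the bound $N\ll 1$. Differentiating and using $P(sp)\equiv 1\pmod p$ gives $f'(s)=a_1'+2a_2's+p\,a_2'\,H(s)$ for some $H\in\mathbb Z[s]$. For odd $p$, either $p\nmid a_2'$, in which case $f''(s)\equiv 2a_2'\pmod p$ is a unit and Hensel's lemma lifts the (at most one) solution mod $p$ uniquely to one mod $p^{K-\ell}$; or $p\mid a_2'$, in which case $p\nmid a_1'$ by coprimality and $f'(u)\equiv a_1'\not\equiv 0\pmod p$ has no solutions. In both sub-cases $N\le p^{2\ell-K}\le p$, yielding $|\mathfrak S'|\le p^{\ell}\ll p^{K/2+\varepsilon}$, and restoring the $d^{1/2}$ prefactor completes the proof. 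The main technical obstacle I anticipate is the prime $p=2$, where the factor $2a_2'$ is already even and $f''$ is no longer a unit mod $2$; here one pushes the Taylor expansion one order further so that the effective derivative modulo $4$ controls the count of $u$, and the same argument yields $N\le 2^{2\ell-K+1}$, giving the same bound up to a bounded multiplicative constant that is absorbed into the $p^{\varepsilon}$ tolerance.
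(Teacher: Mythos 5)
Your overall strategy (CRT reduction followed by a $p$-adic stationary-phase argument at each prime power) matches the paper's, but you use an additive split $s=u+p^{\ell}v$ where the paper uses a multiplicative split $s=x(1+yp^{l})$ over units and sums over $\mathfrak{S}^*$; either decomposition is viable. The problem is that your argument, as written, has a genuine gap at every prime power with \emph{odd} exponent, not merely at $p=2$ as you suggest. Take $K$ odd and $\ell=\lceil K/2\rceil=(K+1)/2$. Your Hensel count gives $N\le p^{2\ell-K}=p$, and the trivial bound $|\mathfrak S'|\le p^{K-\ell}N$ then yields $|\mathfrak S'|\le p^{(K-1)/2}\cdot p=p^{(K+1)/2}=p^{K/2}\cdot p^{1/2}$. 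The extra $p^{1/2}$ is \emph{not} absorbed by $q^{\varepsilon}$: in the application (Lemma \ref{U}) the modulus can have arbitrarily many large primes to odd powers, so this loss multiplies up across primes and destroys the claimed bound. Your own formula is the warning sign — $p^{\ell}\ll p^{K/2+\varepsilon}$ is false whenever $K$ is odd and $p$ is large.

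The missing ingredient is that over the $p^{2\ell-K}$ residues $u\equiv u_0\pmod{p^{K-\ell}}$ in the stationary set, the phase $f(u)$ is not constant but varies quadratically, and this residual sum is a genuine quadratic Gauss sum of modulus $p^{2\ell-K}$, contributing $p^{(2\ell-K)/2}$ rather than the trivial $p^{2\ell-K}$. Concretely, writing $u=u_0+p^{K-\ell}w$ with $w\pmod{p^{2\ell-K}}$ and using $f'(u_0)\equiv 0\pmod{p^{K-\ell}}$, one finds $f(u)\equiv f(u_0)+p^{2(K-\ell)}g(w)\pmod{p^{K}}$ with $g$ quadratic in $w$ and leading coefficient $\tfrac12 f''(u_0)$, a unit; the $w$-sum is then a Gauss sum. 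This recovers $|\mathfrak S'|\le p^{K-\ell}\cdot p^{(2\ell-K)/2}=p^{K/2}$, which is exactly what the paper achieves in the odd case via its three-way split $s=x\bigl(1+(z+yp)p^{l}\bigr)$: the $y$-sum forces $x$ into the Hensel set $\mathbf V$, and the extra variable $z\pmod{p}$ produces the needed Gauss sum of modulus $p$. Once you add this refinement, your approach gives the lemma. (Your remark about $p=2$ is a non-issue by comparison — there the loss is a bounded power of $2$ and is harmless.)
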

\begin{proof}
We begin by factoring out the greatest common divisor $(a_1,a_2,q)$ in the exponential term, thereby reducing the problem to the case where $(a_1,a_2,q)=1$. This convention will be maintained throughout the proof.

By the Chinese Remainder Theorem, the sum $\mathfrak{S}$ admits a decomposition over coprime moduli. Specifically, if $q=q_1q_2$ with $(q_1,q_2)=1$, then
\[
\mathfrak{S}(a_1,a_2,q)=\mathfrak{S}(a_1\bar{q}_2,a_2\bar{q}_2,q_1)\mathfrak{S}(a_1\bar{q}_1,a_2\bar{q}_1,q_2),
\]
where
$\bar{q}_i$ denotes the inverse of  ${q}_i$ modulo ${q}_j$ ($i\neq j$). It is worth remarking that the polynomial $P(x)$ may vary across different instances of $\mathfrak{S}$ above. However, this does affect our following analysis, as it is independent of the specific form of $P(x)$. This decomposition reduces the problem to bounding $\mathfrak{S}$ for prime powers $p^k$.

Without loss of generality, we assume
\[
P(x)=1+c_1x+c_2x^2+\dots+c_{k-1}x^{k-1}
\]
for any integers $c_1,\cdots, c_{k-1}$. By canceling out $(s,p^k)$ in the exponential, we have
\[
\mathfrak{S}(a_1,a_2,p^k)=\sum_{k'\le k}\mathfrak{S}^*(a_1,a_2,p^{k'}),
\]
where $\mathfrak{S}^*$ is defined similarly to $\mathfrak{S}$, but with the additional condition $(s,q)=1$. This further reduces the problem to proving
\begin{align}\label{eqmS*}
\mathfrak{S}^*(a_1,a_2,p^k)\ll{p}^{\frac k2},
\end{align}
which we address by considering two cases based on the parity of $k$.

For $k=2l$, we set
\begin{equation}
s=x(1+yp^l)\nonumber
\end{equation}
with $x$ and $y$ modulo $p^l$, and $(x,p)=1$. It is straightforward to verify that
\begin{align*}
\mathfrak{S}^*=&\sums_{x\ppmod {p^l}}e_{p^{2l}}\Bigg(a_1x+a_2x^2+a_2\sum_{i=1}^{2l-1}c_{i}p^{i}x^{i+2}\Bigg)\\
&\ \ \ \ \ \times\sum_{y\ppmod {p^l}}e_{p^{l}}\Bigg(\Bigg(a_1x+2a_2x^2+a_2\sum_{i=1}^{l-1}(i+2)c_{i}p^{i}x^{i+2}\Bigg)y\Bigg),
\end{align*}
where the inner sum over $y$\ $(\bmod\ {p^l})$ vanishes unless $x\in\mathbf{V}$, defined by
\begin{equation}
\mathbf{V}=\left\{x\left(\bmod\  p^l\right), (x,p)=1; a_1x+2a_2x^2+a_2\sum_{i=1}^{l-1}(i+2)c_{i}p^{i}x^{i+2}\equiv0 \left(\bmod\ {p^l}\right)\right\}.\nonumber
\end{equation}
For $(a_1,a_2,p)=1$, Hensel's lemma implies that $\mathbf{V}$ is either empty or contains exactly one element. Furthermore, $(a_1a_2,p)=1$ is necessary for $\mathbf{V}$ to be non-empty. Hence, we have
\begin{equation}\label{yi}
\mathfrak{S}^*\ll p^l\#\mathbf{V}\ll p^{\frac{k}{2}}.
\end{equation}

We now turn to the case of odd integers $k=2l+1$. If $l=0$, $\mathfrak{S}^*$ reduces to a Gauss sum, and \eqref{eqmS*} holds.
For $l\geq1$, we express $s$ as
\begin{equation*}
	s=x\left(1+(z+yp)p^l\right),
\end{equation*}
where $z$ is modulo $p$, and $x, y$ are modulo $p^l$ with $(x,p)=1$. We then derive:
\begin{align}
\mathfrak{S}^*
&\ll\sums_{x\ppmod {p^l}}\left\vert\sum_{z\ppmod{p}}e_p\Bigg(\Bigg(a_1x+a_2x^2+a_2\sum_{i=1}^{l}(i+2)c_{i}p^{i}x^{i+2}\Bigg)p^{-l}z+a_2x^2z^2\Bigg)\right\vert\nonumber\\
&\ \ \ \ \ \ \ \ \ \ \ \times\left\vert\sum_{y\ppmod {p^l}}e_{p^l}\Bigg(\Bigg(a_1x+a_2x^2+a_2\sum_{i=1}^{l-1}(i+2)c_{i}p^{i}x^{i+2}\Bigg)y\Bigg)\right\vert,\nonumber
\end{align}
where the sum over $y$ modulo $p^l$ vanishes unless $x\in\mathbf{V}$. The sum over $z$ then becomes a Gauss sum.
Consequently, we have
\[
\mathfrak{S}^*\ll p^{l}\sums_{\substack{x\ppmod{p^l}\\x\in\mathbf{V}}}(a_{2}x,p)^{\half}p^{\frac12}=p^{\frac{2l+1}{2}}.
\]
Combining this bound with $\eqref{yi}$, we establish the desired inequality \eqref{eqmS*}, thereby completing the proof.
\end{proof}

\begin{lemma}\label{U}	
Let $q\ge1$ and $\mu, \gamma$ be integers. For any $\rho\mid q$ with $3\nmid \rho$, let
\begin{equation}
\mathcal{U}\left(\gamma,\mu,\rho,q\right)=\sums_{\substack{u\ppmod{q}\\ {\mu}u-\gamma\bar{u}^2\equiv0\ppmod{\rho}}} e_{q\rho}\left(\gamma\bar{u}^2+2{\mu}u\right).\nonumber
\end{equation}
Then we have the estimate
\begin{equation}\label{U_1}
	\mathcal{U}\left(\gamma,\mu,\rho,q\right)\ll\left(\gamma,\mu,q\right)^{\half}{q}^{\half+\ve}\rho^{-\half}.
\end{equation}
\end{lemma}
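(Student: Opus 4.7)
The plan is to reduce $\mathcal{U}$ to an unrestricted Sali\'e-type complete sum at the enlarged modulus $q\rho$ and then invoke Lemma~\ref{Est of ExpSums}. The key identity to establish is
\[
\mathcal{U}(\gamma,\mu,\rho,q)=\frac{1}{\rho}\sums_{u\pmod{q\rho}}e_{q\rho}\!\bigl(\gamma\bar u^{\,2}+2\mu u\bigr),
\]
which trades the congruence restriction $\mu u\equiv\gamma\bar u^{\,2}\pmod\rho$ for a harmless factor of $1/\rho$.

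To prove the identity I would parametrise $u'\in(\mathbb{Z}/q\rho)^\times$ as $u'=u+jq$ with $u$ running through $(\mathbb{Z}/q)^\times$ and $j$ through $\mathbb{Z}/\rho\mathbb{Z}$. Because $\rho\mid q$, we have $q^2\equiv 0\pmod{q\rho}$, so the Taylor expansion of the inverse truncates at first order:
\[
\overline{u+jq}\equiv \bar u-jq\bar u^{\,2}\pmod{q\rho},\qquad (\overline{u+jq})^{\!2}\equiv \bar u^{\,2}-2jq\bar u^{\,3}\pmod{q\rho}.
\]
Substituting these into the exponential gives
\[
e_{q\rho}\!\bigl(\gamma(\overline{u+jq})^{\,2}+2\mu(u+jq)\bigr)=e_{q\rho}\!\bigl(\gamma\bar u^{\,2}+2\mu u\bigr)\,e_\rho\!\bigl(2j(\mu-\gamma\bar u^{\,3})\bigr).
\]
Summing the $j$-dependent factor over $\mathbb{Z}/\rho\mathbb{Z}$ produces $\rho\cdot\mathbf{1}[\rho\mid 2(\mu-\gamma\bar u^{\,3})]$. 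Under the hypothesis $3\nmid\rho$ (and splitting off the $2$-part of $\rho$ via the Chinese remainder theorem, which reduces the odd-factor analysis to the case of odd $\rho$) this indicator collapses to $\mathbf{1}[\mu\equiv\gamma\bar u^{\,3}\pmod\rho]$, and multiplying the cubic congruence through by $u^3$ recovers precisely the original constraint $\mu u\equiv\gamma\bar u^{\,2}\pmod\rho$. Summing over $u$ yields the identity.

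With the identity in hand, I would apply Lemma~\ref{Est of ExpSums} at modulus $q\rho$ to the complete sum:
\[
\sums_{u\pmod{q\rho}}e_{q\rho}\!\bigl(\gamma\bar u^{\,2}+2\mu u\bigr)\ll (\gamma,2\mu,q\rho)^{1/2}(q\rho)^{1/2+\varepsilon}.
\]
A prime-by-prime comparison, exploiting that every prime dividing $q\rho$ already divides $q$ and that the factor of $2$ can be absorbed since the odd primes of $\rho$ do not contribute extra, controls $(\gamma,2\mu,q\rho)^{1/2}$ by $(\gamma,\mu,q)^{1/2}$ up to an admissible $q^\varepsilon$-factor. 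Dividing by $\rho$ produces the stated bound $\mathcal{U}\ll(\gamma,\mu,q)^{1/2}q^{1/2+\varepsilon}\rho^{-1/2}$.

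The most delicate step is the verification of the lifting identity: one must confirm that the first-order expansion of $\overline{u+jq}$ is exact modulo $q\rho$, and that the character sum in $j$ reproduces precisely the cubic constraint defining $\mathcal{U}$. The role of $3\nmid\rho$ is exactly to secure the equivalence between the cubic congruence $\mu u^3\equiv\gamma\pmod\rho$ and the restriction $\mu u\equiv\gamma\bar u^{\,2}\pmod\rho$ via Hensel-style inversion (the formal derivative $3u^2$ is a unit), so that no spurious residue classes pollute the $j$-sum; the gcd bookkeeping in the final step is routine but needs to be carried out carefully to avoid losing a factor of $\rho^{1/2}$.
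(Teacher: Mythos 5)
Your approach is genuinely different from the paper's, and the central idea is attractive. Where the paper fixes the residue class $u\equiv\delta\pmod\rho$ (so that $\delta$ ranges over a set $\Delta$ of size $O(\rho^\varepsilon)$), parametrises the fiber as $u=\delta(1-x\rho)$ with $x\pmod{q/\rho}$, and applies Lemma~\ref{ExpSums} to the resulting sum in $x$, you instead lift the whole sum to modulus $q\rho$ by writing $u'=u+jq$, observe that the $j$-sum reproduces the constraint, and then invoke Lemma~\ref{Est of ExpSums} directly on the complete Sali\'e-type sum. If it went through, your argument would be shorter and would dispense with Lemma~\ref{ExpSums} entirely.

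However, the step you describe as ``routine but needs to be carried out carefully'' is precisely where the argument breaks. The claim that a prime-by-prime comparison gives $(\gamma,2\mu,q\rho)^{1/2}\ll(\gamma,\mu,q)^{1/2}q^{\varepsilon}$ is false in general. At any prime $p\mid\rho$ with $v_p(q)<\min(v_p(\gamma),v_p(\mu))$ one has $v_p\bigl((\gamma,2\mu,q\rho)\bigr)=v_p(q)+v_p(\rho)>v_p(q)=v_p\bigl((\gamma,\mu,q)\bigr)$, and the discrepancy can be as large as $\rho$, i.e.\ a loss of $\rho^{1/2}$ in the final bound --- exactly the saving the lemma is supposed to produce. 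Concretely, with $\gamma=\mu=q\rho$ your chain of inequalities would give $(q\rho)^{1/2}(q\rho)^{1/2+\varepsilon}/\rho=q^{1+\varepsilon}\rho^{\varepsilon}$, which is $\rho^{1/2}$ weaker than the target $(\gamma,\mu,q)^{1/2}q^{1/2+\varepsilon}\rho^{-1/2}=q^{1+\varepsilon}\rho^{-1/2}$. The paper avoids this by first cancelling $g=(\gamma,\mu,\rho)$ from the exponential and the congruence, reducing to $(\gamma\mu,\rho)=1$, and then separating $q=q_1q_2$ with $q_1$ the part coprime to $\rho$; the $q_2$-factor then carries no gcd factor at all. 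Your proof needs the same preliminary reduction; without it the gcd bookkeeping does not close.

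Two further, smaller, points. First, your identity $\mathcal{U}=\rho^{-1}\sum_{u'\ppmod{q\rho}}e_{q\rho}(\gamma\bar u'^2+2\mu u')$ is correct for odd $\rho$, but for $4\mid\rho$ the $j$-sum $\sum_{j\ppmod\rho}e_\rho\bigl(2j(\mu-\gamma\bar u^3)\bigr)$ detects only the weaker condition $(\rho/2)\mid(\mu-\gamma\bar u^3)$, so the right-hand side is a sum over a strictly larger set of $u$ than the one defining $\mathcal{U}$, and these two sums are not comparable in general; ``split off the $2$-part by CRT'' does not by itself tell you how to bound the resulting $2$-adic factor. Second, the hypothesis $3\nmid\rho$ plays no role in your proof as written: the passage between $\mu u\equiv\gamma\bar u^{2}\pmod\rho$ and $\mu\equiv\gamma\bar u^{3}\pmod\rho$ is simply multiplication by the unit $\bar u$ and requires no Hensel inversion of the cube map, so attributing the hypothesis to that step is a red herring. (In the paper's proof the hypothesis enters through the coefficient $3\gamma\bar\delta^2$ appearing after the Taylor expansion, in conjunction with $q^*\mid\rho$.) In short: the lifting idea is a viable alternative route, but the gcd comparison and the even-$\rho$ case are real gaps, and both are repaired by the same preliminary reduction that the paper performs and you omit.
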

\begin{proof}
Note that in the sum, the exponential term is uniquely determined by the residue of $u$ modulo $q$. To elaborate, suppose $u_1\equiv{u_2}\left(\bmod\ {q}\right)$. Writing $u_1=u_2(1+cq)$ and $\bar{u}_1=\bar{u}_2(1-cq)$, we observe
\begin{align}
e_{q\rho}\left(\gamma\bar{u}_1^2+2\mu{u_1}\right)&=e_{q\rho}\left(\gamma\bar{u}_2^2+2\mu{u_2}\right)e_{\rho} \left(c\left(2\mu{u_2}-2\gamma\bar{u}_2^2\right)\right)\nonumber\\
&=e_{q\rho}\left(\gamma\bar{u}_2^2+2\mu{u_2}\right).\nonumber
\end{align}
By canceling $(\gamma,\mu,\rho)$ in both the exponential term and the congruence relation, we obtain the coprimality condition $(\gamma,\mu,\rho)=1$. Under this condition, the congruence equation $\mu u-\gamma\overline{u}^2\equiv0 \pmod{\rho}$ has no solution unless $(\gamma,\rho)=(\mu,\rho)=1$.
Let $q=q_1q_2$, where $q_1$ is the largest factor satisfying $(q_1,\rho)=1$.
By the multiplicative property,
\[
\mathcal{U}\left(\gamma,\mu,\rho,q\right)=\mathcal{U}\left(\gamma \bar{q}_2,\mu\bar{q}_2,1,q_1\right)\mathcal{U}\left(\gamma\bar{q}_1,\mu\bar{q}_1,\rho,q_2\right),
\]
where
\[
\mathcal{U}\left(\gamma\bar{q}_2,\mu\bar{q}_2,1,q_1\right)=\mathcal{S}(\gamma\bar{q}_2,\mu\bar{q}_2,q_1)\ll \left(\gamma,\mu,q_1\right)^{\half}{q_1}^{\half+\ve}
\]
by Lemma \ref{Est of ExpSums}.
We therefore restrict our analysis to establishing the desired bound \eqref{U_1} for parameters $\rho,q$ satisfying $q^*\mid \rho\mid q$ and $(\gamma\mu,\rho)=1$.

For $(\gamma\mu,\rho)=1$, we define
\[
\Delta=\{\delta \ppmod {\rho },\ (\delta,\rho)=1;\ \mu\delta-\gamma{\overline{\delta}^2}\equiv{0}\ppmod{\rho}\},
\]
and it follows that $\#\Delta\ll \rho^\ve$ for $3\nmid \rho$.
We then have
\begin{align}\label{DU}
\mathcal{U}\left(\gamma,\mu,\rho,q\right)=\sum_{\delta\in\Delta}\sums_{\substack{u\ppmod{q}\\ u\equiv \delta \ppmod{\rho}}} e_{q\rho}\left(\gamma\bar{u}^2+2{\mu}u\right).
\end{align}
By expressing $u$ as
$u=\delta\left(1-x\rho \right)$
with $x$ modulo $q/\rho $, it follows that
\begin{align*}
\bar{u}\equiv\bar{\delta}\left(1+x\rho +(x\rho )^2+...+(x\rho )^{k}\right) \ppmod {q\rho}
\end{align*}
for some $k\ge1$ such that $q\rho \mid \rho ^k$.
Substituting this into $\eqref{DU}$ yields
\begin{align}
\mathcal{U}\left(\gamma,\mu,\rho,q\right)=\sum_{\delta\in\Delta}e_{q\rho}\left(\gamma\bar{\delta}^2+2\mu{\delta}\right)\sum_{x \ppmod{q/\rho }} e_{q\rho}\left(\left(2\gamma\bar{\delta}^2-2\mu{\delta}\right)x\rho +3\gamma\bar{\delta}^2(x\rho )^2P(x\rho )\right)\nonumber
\end{align}
where $P(x)$ is a polynomial  satisfying $P(0)=1$. 
 After canceling out $\rho^2$ in the exponential, we obtain
\begin{align}
\mathcal{U}\left(\gamma,\mu,\rho,q\right)\le\sum_{\delta\in\Delta}\Bigg|\sum_{x \ppmod{q/\rho }} e_{q/\rho}\left(\frac{2\gamma\bar{\delta}^2-2\mu{\delta}}{\rho }x+3\gamma\bar{\delta}^2x^2P(x\rho )\right)\Bigg|.\nonumber
\end{align}
Since $q^*\mid \rho \mid q$, we note that $P(x\rho )$ is also a polynomial in $x(q/\rho)^*$.
Thus, by Lemma $\ref{ExpSums}$, we conclude
\begin{align}
\mathcal{U}\left(\gamma,\mu,\rho,q\right)\ll q^{\frac12+\ve}\rho^{-\frac12},\notag
\end{align}
noting that
\[
\(\frac{\gamma\bar{\delta}^2-\mu{\delta}}{\rho }, \gamma, \frac q\rho\)=(\gamma,\mu,q)=1.
\]
This completes the proof.
\end{proof}

\section{Proof of Theorem $\ref{le:1}$}\label{secTh}
In this section, we establish an upper bound for the average of quadruple product of Kloosterman sum
\[
\mathrm{A}(q)={\sums_{1\leq b_1,b_2,b_3,b_4\leq B}}\vert{G}_{q}(b_1,b_2,b_3,b_4)\vert.
\]
\subsection{Initial treatment of $A(q)$}
We decompose $q$ into three coprime parts as follows:
\[
q=\rho c^2(d^2d^*) \ \ \ \text{or}\ \ \   q=\rho (3c^2)(d^2d^*),
 \]
depending on whether $q$ contains a power $3^{2k+1}$ for $k\ge1$,
where $\rho$ denotes the square-free part, $c^2$ (or $3c^2$) represents the product of all even powers (as well as a possible odd power of 3), and $d^2d^*$ corresponds to the product of the remaining odd prime powers.
Associated with the square-free part $\rho$,
we partition the set of $\vec{b}$ as follows:
\[
\left\{\vec{b}=(b_1,b_2,b_3,b_4)\right\}=\bigcup_{\rho_1\rho_2=\rho}\mathbb{D}(\rho_1,\rho_2),
\]
where
\[
\mathbb{D}(\rho_1,\rho_2)=\left\{\vec{b};\ \vec{b}\in \bigcap_{p\mid\rho_1}\mathbb{D}(p),\ \vec{b}\notin\bigcup_{p\mid \rho_2}\mathbb{D}(p)\right\}
\]
with $\mathbb{D}(p)$ defined as in \eqref{eqDp}.
For any $p\mid \rho_1$ and $\vec{b}\in \mathbb{D}(\rho_1,\rho_2)$, at least one of the following holds:
\[
b_1\equiv b_2 \ppmod{p},\quad b_1\equiv b_3 \ppmod{p}, \quad b_1\equiv b_4 \ppmod{p}.
\]
Let $\rho_{11}$ be the product of all $p$ such that $b_1\equiv b_2 \pmod{p}$, let $\rho_{12}$ be the product of all $p\nmid\rho_{11}$ such that $b_1\equiv b_3 \pmod{p}$, and let $\rho_{13}$ be the product of the remaining primes.
This yields
\[
\mathbb{D}(\rho_1,\rho_2)=\bigcup_{\rho_{11}\rho_{12}\rho_{13}=\rho_1}\Big(\mathbb{D}_1(\rho_{11},\rho_2)\cap \mathbb{D}_2(\rho_{12},\rho_2)\cap \mathbb{D}_3(\rho_{13},\rho_2)\Big),
\]
where
\[
\mathbb{D}_i(\rho_{1i},\rho_2)=\left\{\vec{b}\in\mathbb{D}(\rho_1,\rho_2);\ b_1\equiv b_{1+i} \ppmod{\rho_{1i}}\right\}
\]
for $i=1,2,3$.
Without loss of generality, we may assume
$\rho_{11}\ge \rho_1^{\frac13}$.  By the multiplicative property of
${G}_{q}$, we have
\begin{align}\label{A(q)1}
\mathrm{A}\left(q\right)\ll\sum_{\rho_1\rho_2=\rho}\sum_{\substack{\rho_{11}\mid\ \rho_1\\ \rho_{11}\ge\rho_1^{1/3}}} \sum_{\substack{1\leq b_1,b_2,b_3,b_4\leq B\\\vec{b}\in \mathbb{D}_{1}(\rho_{11},\rho_2)}}\left|G_{\rho_1}G_{\rho_2}G_{c^2(or\  3c^2)}G_{d^2d^*}\left(b_1,b_2,b_3,b_4\right)\right|.
\end{align}

If $\rho_{11}\ge B$, the condition $\vec{b}\in\mathbb{D}_1(\rho_{11},\rho_2)$ implies that $(b_1,b_2,b_3,b_4)$ consists of two equal pairs. A straightforward application of the Weil bound then yields this contribution
\begin{equation}
	\ll{B}^2q^3,
\end{equation}
which is an admissible error term in our theorem.
It remains to treat the case where $\rho_{11}<B$, which we assume for the remainder of this section.
Depending on whether $\rho_{11}$ is greater than $\rho_{1}^{\frac12}$ or not,
we decompose the sum on the right-hand side of \eqref{A(q)2} into two parts:
\[
A(q)\ll A_1(q)+A_2(q),
\]
where $A_1(q)$ corresponds to the sum over $\rho_{1}^{\frac13}\le\rho_{11}<\rho_{1}^{\frac12}$, and $A_2(q)$ corresponds to the remaining case.
Applying Propositions \ref{prop:1}-\ref{prop:3} to \eqref{A(q)1} yields
\begin{align}\label{A(q)2}
\mathrm{A}_1(q)\ll c^{5}d^{5}(d^*)^3&\underset{\rho_1\rho_2= \rho}{\sum}\sum_{\substack{\rho_{11}\mid\rho_1\\ \rho_1^{1/3}\le\rho_{11}<\rho_1^{1/2}}} \rho_1^3\rho_2^{\frac52}\sum_{\eta_1\ppmod{\rho_{11}}}\sum_{\eta_2\ppmod{\rho_{11}}}\\
&{\sum_{\substack{b_1,b_2\\b_1\equiv{b_2}\equiv{\eta_1}\ppmod{\rho_{11}}}}}W\left(\frac{b_1}{B}\right)W\left(\frac{b_2}{B}\right){\sum_{\substack{1\leq b_3,b_4\leq B\\b_3\equiv{b_4}\equiv{\eta_2}\ppmod{\rho_{11}}}}}\mathcal{N}_1(c)\mathcal{N}_2(d),\nonumber
\end{align}
noting that $\mathcal{N}_2(3^k)\le \mathcal{N}_1(3^k)$. The revised structure of $A_2(q)$ is similar to that of $A_1(q)$, but omits the congruence $b_1\equiv b_2 \pmod{\rho_{11}}$. Explicitly,
\begin{align*}
\mathrm{A}_2(q)\ll c^{5}d^{5}(d^*)^3&\underset{\rho_1\rho_2= \rho}{\sum}\sum_{\substack{\rho_{11}\mid\rho_1\\ \rho_{11}\ge\rho_1^{1/2}}} \rho_1^3\rho_2^{\frac52}\sum_{\eta\ppmod{\rho_{11}}}\\
&{\sum_{b_1,b_2}}W\left(\frac{b_1}{B}\right)W\left(\frac{b_2}{B}\right){\sum_{\substack{1\leq b_3,b_4\leq B\\b_3\equiv{b_4}\equiv{\eta}\ppmod{\rho_{11}}}}}\mathcal{N}_1(c)\mathcal{N}_2(d).
\end{align*}
When applying the Poisson summation formula, the condition $b_1\equiv b_2 \pmod{\rho_{11}}$ reduces the zero-frequency component by a factor of $\rho_{11}$ but increases the non-zero frequency components by the same factor. For small $\rho_{11}$, the zero-frequency component dominates, whereas for large $\rho_{11}$, the non-zero frequency components dominate. To balance these effects, we impose distinct conditions in $A_1(q)$ and $A_2(q)$.


\subsection{The bound of $A_1(q)$}
By expressing the congruences
\[
\sum_{i=1}^4 s_i\equiv0 \(\bmod\ {c}\)\quad\text{and}\quad \sum_{i=1}^{4}(uu_{i}+\bar{b}_i\bar{u}_i)\equiv0\left(\bmod\ dd^*\right)
\]
as exponential sums via additive characters and applying the simple relation
\[
\mathbf{1}_{uu_i\equiv\bar{u}_i\bar{b}_i\ppmod{d}}=\sum_{t_i\ppmod{d^*}}
\mathbf{1}_{(1+t_i d)uu_i\equiv\bar{u}_i\bar{b}_i\ppmod{dd^*}},
\]
we deduce
\[
\mathcal{N}_1(c)=\frac{1}{c}\sums_{\substack{s,s_i\ppmod{c}\\s{s_i}^2\equiv\bar{b}_i\ppmod{ c}\\(i=1,2,3,4)}} \sum_{t\ppmod{c}}{e}_{c}\Bigg(t\sum_{i=1}^{4}s_i\Bigg)
\]
and
\[
\mathcal{N}_2(d)=\frac{1}{dd^*}\mathop{\sums_{u,u_i\ppmod{d}} \sum_{t_i\ppmod{d^*}}}_{\substack{(1+t_i d)uu_i\equiv\bar{u}_i\bar{b}_i\ppmod{dd^*}\\(i=1,2,3,4)}} \sum_{v\ppmod{dd^*}} e_{ dd^*}\Bigg(v\sum_{i=1}^{4}uu_{i}(2+t_id)\Bigg).
\]
Substituting these two equations into $A_1(q)$,  we observe complete symmetry in the summations over $b_1$ and $b_2$ (and analogously $b_3$ and $b_4$), which leads to
\begin{align}\label{A_1(q)}
	\mathrm{A}_1(q)\ll c^{4}d^4(d^*)^2&\sum_{\rho_1\rho_2=\rho} \rho_1^{3}\rho_2^{\frac52} \sum_{\substack{\rho_{11}\mid\ \rho_1\\\rho_1^{1/3}\le\rho_{11}<\rho_1^{1/2}}}\sum_{\eta_1\ppmod{\rho_{11}}}\sum_{\eta_2\ppmod{\rho_{11}}}\\
	&\sums_{s\ppmod{c}}\sum_{t\ppmod{c}}\sums_{u\ppmod{d}}\sum_{v\ppmod{dd^*}}{|\mathcal{A}_3|}^2{\vert{\mathcal{A}}_4\vert}^2 \nonumber
\end{align}
where
\begin{align}\label{A-three}
{\mathcal{A}}_3={\sums_{s'\ppmod{c}}}e_{ c}\left(ts'\right){\sums_{u'\ppmod{d}}}e_{ dd^*}\left(2uvu'\right)
{\sum_{t'\ppmod{d^*}}}e_{d^*}\left(uvt'u'\right)\sum_{\substack{b\in\mathbb{Z}\\b\equiv\overline{s{s'}^2}\ppmod{c}\\b\equiv\overline{u{u'}^2}(1-t' d)\ppmod{dd^*}\\b\equiv\eta_1\ppmod{\rho_{11}}}}W\left(\frac{b}{B}\right)
\end{align}
and
\begin{align}\label{A-four}
{\mathcal{A}}_4={\sums_{s'\ppmod{c}}}e_{ c}\left(ts'\right){\sums_{u'\ppmod{d}}}e_{ dd^*}\left(2uvu'\right){\sum_{t'\ppmod{d^*}}}e_{d^*}\left(uvt'u'\right)\sum_{\substack{1\leq b\leq B\\b\equiv\overline{s{s'}^2}\ppmod{c}\\b\equiv\overline{u{u'}^2}(1-t' d)\ppmod{dd^*}\\b\equiv \eta_2\ppmod{\rho_{11}}}}1.
\end{align}

We begin by deducing an upper bound for ${\mathcal{A}}_3$, which is then subsequently substituted into the original expression along with an expansion of
${\mathcal{A}}_4$, ultimately leading to the final bound.
\subsubsection{An upper bound for ${\mathcal{A}}_3$}
Applying the Poisson summation formula to the sum over $b$, we obtain its transformed form
\begin{align*}
\frac{B}{\rho_{11}cdd^*}\sum_{a\in \mathbb{Z}}e_{c}\left(a\overline{ \rho_{11}dd^*}\overline{s{s'}^2}\right)e_{ dd^*}\left(a\overline{\rho_{11}c}(\overline{u{u'}^2}(1-t' d))\right)e_{ \rho_{11}}\left(a\eta_1\overline{cdd^*}\right)\widehat{W}\left(\frac{Ba}{\rho_{11}c dd^*}\right).
\end{align*}
After substituting this into $\eqref{A-three}$, we decompose ${\mathcal{A}}_3$ into two parts as follows:
\begin{equation}\label{A_3}
	{\mathcal{A}}_3={\mathcal{A}}_{31}+{\mathcal{A}}_{32},
\end{equation}
where ${\mathcal{A}}_{31}$ represents the contribution of the zero-frequency components with $a=0$, and ${\mathcal{A}}_{32}$ corresponds to the remaining terms.

For ${\mathcal{A}}_{31}$, the sum over $s'$ reduces to a Ramanujan sum, yielding
\begin{equation}\nonumber
	S_{31}:=\sums_{s'\ppmod{c}}e_{ c }(ts')\ll\left(t, c \right)
\end{equation}
and the sum over $u'$ and $t'$ is
\begin{align}
	U_{31}&:={\sums_{u'\ppmod{d}}}e_{ dd^* }\left(2uvu'\right){\sum_{t'\ppmod{d^*}}}e_{d^* }\left(uvt'u'\right)\nonumber\\
	&=d^* \mathbf{1}_{d^* \mid{v}}{\sums_{u'\ppmod{d}}}e_{ d }\left(2u\frac{v}{d^* }u'\right)\nonumber\\
	&\ll d^* \left(\frac{v}{d^* }, d \right)\mathbf{1}_{d^* \mid{v}}.\nonumber
\end{align}
Then we conclude that
\begin{align}\label{A31}
{\mathcal{A}}_{31}&=\frac{B}{\rho_{11} c  dd^* }S_{31}U_{31}\ll\frac{B}{\rho_{11} c  d }\left(t, c \right)\left(\frac{v}{d^*}, d \right)\mathbf{1}_{d^* |v}.
\end{align}

Consistent with the preceding analysis, let $S_{32}$ denote the sum over $s'$ in ${\mathcal{A}}_{32}$ and let $U_{32}$ denote the sum over $u'$ and $t'$. We observe that
\[
S_{32}=\mathcal{S}\left(a\overline{\rho_{11} dd^* }\bar{s},t, c \right).
\]
The summation over $t'$ differs from the previous case:
\begin{equation}
	\sum_{t'\ppmod{d^*}}\left(t'\left(uvu'-a\overline{\rho_{11} c }\overline{u{u'}^2}\right)\right)=d^* \mathbf{1}_{uvu'-a\overline{\rho_{11} c }\overline{u{u'}^2}\equiv0\ppmod{d^*}},\nonumber
\end{equation}
which implies
\[
U_{32}=d^* \mathcal{U}\left(a\overline{\rho_{11} c }\bar{u},uv, d^*,d \right).
\]
Thus, by Lemma $\ref{Est of ExpSums}$ and Lemma $\ref{U}$, we have
\begin{align}\label{A32}
	{\mathcal{A}}_{32}&=\frac{B}{\rho_{11} c  dd^* }\sum_{a\in\mathbb{Z}}\widehat{W}\left(\frac{Ba}{\rho_{11} c  dd^* }\right)e_{ \rho_{11}}\left(a\eta_1\overline{cdd^*}\right)
	S_{32}U_{32}\\
	&\ll\frac{B}{\rho_{11} c  dd^* }\sum_{a\in \mathbb{Z}}\widehat{W}\left(\frac{Ba}{\rho_{11} c  dd^* }\right)
	\left\vert\mathcal{S}\bigg(a\overline{\rho_{11} dd^* }\bar{s},t, c \bigg)\bigg(d^* \mathcal{U}\big(a\overline{\rho_{11} c }\bar{u},uv, d^*,d \big)\bigg)\right\vert\nonumber\\
	&\ll (cdd^*)^{\frac12}.\nonumber
\end{align}
Substituting equations $\eqref{A31}$ and $\eqref{A32}$ into $\eqref{A_3}$ yields
\begin{equation}\label{A3}
	{\mathcal{A}}_3\ll\frac{B}{\rho_{11} c  d }\left(t, c \right)\left(\frac{v}{{d^* }}, d \right)\mathbf{1}_{d^* |v}+(cdd^*)^{\frac12}.
\end{equation}

\subsubsection{Estimating $A_1(q)$}
After substituting the upper bound $\eqref{A3}$ into $\eqref{A_1(q)}$, we express
\begin{equation}\label{A_1^1}
	\mathrm{A}_1(q)\ll{I_1}+{I_2},
\end{equation}
where
\begin{align*}
	I_1= (Bcdd^*)^2&\sum_{\rho_1\rho_2=\rho} \rho_1^{3}\rho_2^{\frac52}\sum_{\substack{\rho_{11}\mid\ \rho_1\\\rho_1^{1/3}\le\rho_{11}<\rho_1^{1/2}}}\rho_{11}^{-2}\sum_{\eta_1\ppmod{\rho_{11}}}\sum_{\eta_2\ppmod{\rho_{11}}}\\
	&\sums_{s\ppmod{c}}\sum_{t\ppmod{c}}(t,c)^2\sums_{u\ppmod{d}}\sum_{v\ppmod{dd^*}}\mathbf{1}_{d^* |v}\left(\frac{v}{{d^* }}, d \right)^2{\vert{\mathcal{A}}_4\vert}^2
\end{align*}
and
\begin{align*}
	I_2= (cd)^5(d^*)^3&\sum_{\rho_1\rho_2=\rho} \rho_1^{3}\rho_2^{\frac52}\sum_{\substack{\rho_{11}\mid\ \rho_1\\\rho_1^{1/3}\le\rho_{11}<\rho_1^{1/2}}}\sum_{\eta_1\ppmod{\rho_{11}}}\sum_{\eta_2\ppmod{\rho_{11}}}\\
	&\sums_{s\ppmod{c}}\sum_{t\ppmod{c}}\sums_{u\ppmod{d}}\sum_{v\ppmod{dd^*}}{\vert{\mathcal{A}}_4\vert}^2.
\end{align*}
Expanding the squared modulus $|\mathcal{A}_4|^2=\mathcal{A}_4\bar{\mathcal{A}}_4$ yields
\begin{align}\label{eqA42}
	{\vert{\mathcal{A}}_4\vert}^2=&{\sums_{s_1,s_2\ppmod{c }}}e_{ c }\left(t\left(s_1-s_2\right)\right){\sums_{u_1,u_2\ppmod{d}}}e_{ dd^* }\left(2uv\left(u_1-u_2\right)\right)\\
	&\times{\sum_{t_1,t_2\ppmod{d^*}}}e_{d^* }\left(uv\left(t_1u_1-t_2u_2\right)\right) \sum_{\substack{1\leq b_i\leq B\ \left(i=1,2\right)\\b_i\equiv\overline{s{s_i}^2}\ppmod{c}\\b_i\equiv\overline{u{u_i}^2}\left(1-t_i d \right)\ppmod{dd^*}\\b_i\equiv \eta_2\ppmod{\rho_{11}}}}1.\nonumber
\end{align}

Substituting this into $I_1$, the summations over $\eta_2, u_i, s_i, t_i$ collapse trivially since these variables are uniquely determined by $u, s, b_i$. This reduces the integral to
\begin{align*}
	I_1\ll (Bcdd^*)^2&\sum_{\rho_1\rho_2=\rho} \rho_1^{3}\rho_2^{\frac52}\sum_{\substack{\rho_{11}\mid\ \rho_1\\\rho_1^{1/3}\le\rho_{11}<\rho_1^{1/2}}}\rho_{11}^{-1}\sum_{\substack{1\leq b_1,b_2\leq B\\b_1\equiv b_2\ppmod{\rho_{11}}}}\\
	&\sums_{s\ppmod{c}}\sums_{u\ppmod{d}}\sum_{t\ppmod{c}}(t,c)^2\sum_{v\ppmod{dd^*}}\mathbf{1}_{d^* |v}\left(\frac{v}{{d^* }}, d \right)^2.
\end{align*}
A straightforward calculation shows that
\begin{align}\label{eqI1b}
I_1\ll B^2(cd)^5(d^*)^2\sum_{\rho_1\rho_2=\rho} \rho_1^{3}\rho_2^{\frac52}\underset{\substack{\rho_{11}\mid\ \rho_1\\\rho_1^{1/3}\le\rho_{11}<\rho_1^{1/2}}}{\sum}\frac{B^2}{\rho_{11}^{2}}\ll{B}^4q^{\frac52}\mathring{q}^{-\frac12}.
\end{align}

We now proceed to estimate $I_2$. Substituting \eqref{eqA42} into the definition of $I_2$, we evaluate the summations over $t$ and $v$. The sum over $t$ is
\begin{equation}
\sum_{t\ppmod{c }}e_{ c }\left(t\left(s_1-s_2\right)\right)= c \mathbf{1}_{s_1\equiv{s_2} \ppmod{c}},\nonumber
\end{equation}
and the sum over $v$ is
\begin{align}
\sum_{v\ppmod{dd^* }}&e_{ dd^* }\left(uv\left(2u_1+t_1u_1 d -2u_2-t_2u_2 d \right)\right)\nonumber\\&= dd^* \mathbf{1}_{uu_1\left(2+t_1 d \right)\equiv uu_2\left(2+t_2 d \right) \ppmod{dd^*}}.\nonumber
\end{align}
The congruence equations $s_1\equiv s_2\pmod{c}$ and $b_i\equiv\overline{s{s_i}^2}\pmod{c}$ imply
\[
b_1\equiv b_2\ppmod c,
\]
and the congruence equation $uu_1\left(1+\left(1+{t_1} d \right)\right)\equiv{uu_2}\left(1+\left(1+{t_2} d \right)\right)\ \ppmod{dd^* }$,
combined with
$b_i\equiv\overline{u{u_i}^2}\left(1-t_i d \right)\ \ppmod{dd^*}$
yields
\begin{equation*}
	b_1\equiv{b}_2\left(\bmod\  d \right).
\end{equation*}
Since $b_1\equiv b_2\left(\bmod\  c  d \right)$ and $\eta_2, u_i, s_i, t_i$ are uniquely determined by $u, s, b_i$,  we sum over the remaining variables trivially to obtain
\begin{align*}
I_2\ll (cd)^6(d^*)^4\sum_{\rho_1\rho_2=\rho}{\rho_1}^3\rho_2^{\frac{5}{2}}\underset{\substack{\rho_{11}\mid\ \rho_1\\\rho_1^{1/3}\le\rho_{11}<\rho_1^{1/2}}}{\sum}\rho_{11}\sum_{s\ppmod{c}}\sum_{u\ppmod{d}}\sum_{\substack{1\leq b_1,b_2\leq B\\b_1\equiv b_2\ppmod{\rho_{11}cd}}}1.
\end{align*}
Note that the value of $\rho_{11}$ is less than $\rho_1^{\frac12}$ at this point. A straightforward calculation shows that
\begin{align}\label{eqI2b}
I_2\ll(cd)^7(d^*)^4\sum_{\rho_1\rho_2=\rho} \rho_1^{3}\rho_2^{\frac52}\underset{\substack{\rho_{11}\mid\ \rho_1\\\rho_1^{1/3}\le\rho_{11}<\rho_1^{1/2}}}{\sum}\(\frac{B^2}{cd}+\rho_{11}B\)
\ll B^2q^{3}\mathring{q}+B{q}^{\frac72}{\mathring{q}}^\half.
\end{align}
Applying $\eqref{eqI1b}$ and $\eqref{eqI2b}$ in $\eqref{A_1^1}$, we conclude that
\begin{equation}\label{A_1(q)3}
A_1(q)\ll B^4q^{\frac52}\mathring{q}^{-\frac12}+B^2{q}^{3}\mathring{q}+B{q}^{\frac72}{\mathring{q}}^\frac12.	
\end{equation}

\subsection{The bound of $A_2(q)$}
The treatment of $A_2(q)$ is nearly identical to that of $A_1(q)$, with only minor modifications required.
Similarly, we have
\begin{align}\label{A_2(q)1}
	\mathrm{A}_2(q)\ll c^{4}d^4(d^*)^2&\sum_{\rho_1\rho_2=\rho} \rho_1^{3}\rho_2^{\frac52} \underset{\substack{\rho_{11}\mid\ \rho_1\\\rho_{11}\ge\rho_1^{1/2}}}{\sum}\sum_{\eta\ppmod{\rho_{11}}}\\
	&\sums_{s\ppmod{c}}\sum_{t\ppmod{c}}\sums_{u\ppmod{d}}\sum_{v\ppmod{dd^*}}{|\mathcal{A}_3'|}^2{\vert{\mathcal{A}}_4\vert}^2,\notag
\end{align}
where $\mathcal{A}_3'$ differs from $\mathcal{A}_3$ only in the absence of a congruence condition on $\rho_{11}$.
An analogous evaluation yields
\begin{align*}
	{\mathcal{A}}_3'\ll\frac{B}{ c  d }\left(t, c \right)\left(\frac{v}{{d^* }}, d \right)\mathbf{1}_{d^* |v}+(cdd^*)^{\frac12}.
\end{align*}
After substituting this into \eqref{A_2(q)1}, we expand $\vert\mathcal{A}_4'\vert^2$ and treat the sum as before to obtain
\begin{align*}
	\mathrm{A}_2\left(q\right)&\ll B^2(cd)^5(d^*)^2\sum_{\rho_1\rho_2=\rho} \rho_1^{3}\rho_2^{\frac52}\underset{\substack{\rho_{11}\mid\ \rho_1\\\rho_{11}\ge\rho_1^{1/2}}}{\sum}\frac{B^2}{\rho_{11}}+(cd)^7(d^*)^4\sum_{\rho_1\rho_2=\rho} \rho_1^{3}\rho_2^{\frac52}\underset{\substack{\rho_{11}\mid\ \rho_1\\\rho_{11}\ge\rho_1^{1/2}}}{\sum}\(\frac{B^2}{\rho_{11}cd}+B\)\\
	&\ll B^4q^{\frac52}\mathring{q}^{-\frac12}+B^2{q}^{3}\mathring{q}+B{q}^{\frac72}{\mathring{q}}^\frac12.
\end{align*}
Combining this with \eqref{A_1(q)3} establishes the bound in Theorem \ref{le:1}.

\end{document}